\documentclass[11pt,amssymb]{amsart}
\usepackage{amssymb}
\usepackage[mathscr]{eucal}

\newcommand{\Z}{{\mathbb Z}}
\newcommand{\C}{{\mathbb C}}
\newcommand{\Q}{{\mathbb Q}}

\newcommand{\G}{{\mathbb G}}
\newcommand{\g}{{\mathfrak{g}}}

\newcommand{\m}{{\mathfrak{m}}}

\newcommand{\St}{{\rm St}}
\newcommand{\Hom}{{\rm Hom}}

\voffset=-1cm \hoffset=-1cm \textwidth=16cm \textheight=23.3cm

\newtheorem{thm}{Theorem}[section]
\newtheorem{lemma}[thm]{Lemma}
\newtheorem{prop}[thm]{Proposition}
\newtheorem{cor}[thm]{Corollary}

\hoffset=-2.3cm \textwidth=16.5cm \voffset = -10mm
\textheight=22.5cm
\usepackage[all,cmtip]{xy}
\begin{document}

\title[Abstract homomorphisms over coordinate rings of affine curves]{On abstract homomorphisms of Chevalley groups over the coordinate rings of affine curves}

\author[I.~Rapinchuk]{Igor A. Rapinchuk}

\begin{abstract}
The goal of this paper is to establish a general rigidity statement for abstract representations of elementary subgroups of Chevalley groups of rank $\geq 2$
over a class of commutative rings that includes the localizations of 1-generated rings and the coordinate rings of affine curves. This is achieved by developing the approach introduced in our paper \cite{IR}, and in particular by verifying condition (Z) of \cite{IR} over the class of rings at hand. Our main result implies, for example, that any finite-dimensional representation of $SL_n(\Z[X])$ $(n \geq 3)$ over an algebraically closed field of characteristic zero has a standard description, yielding thereby the first unconditional rigidity statement for finitely generated linear groups other than arithmetic groups/lattices.
\end{abstract}

\address{Department of Mathematics, Harvard University, Cambridge, MA, 02138 USA}

\email{rapinch@math.harvard.edu}

\maketitle

\hfill {\it To Gregory A.~Margulis on his 70th birthday}

\section{Introduction} \label{S-Intro}

The purpose of this paper is to describe finite-dimensional representations of elementary subgroups of Chevalley groups of rank $\geq 2$ over a class of commutative rings that includes the localizations of 1-generated rings and the coordinate rings of affine curves. In particular, we establish an unconditional rigidity statement for arbitrary representations
$
\rho \colon SL_n (\Z[X]) \to GL_m (K),
$
where $n \geq 3$ and $K$ is an algebraically closed field of characteristic 0 (see Corollary \ref{C-1}).

The first rigidity result for  representations of $SL_n$ $(n \geq 3)$ over
the rings of algebraic $S$-integers was obtained by Bass, Milnor, and Serre \cite{BMS} as a consequence of their solution of the congruence subgroup problem. Later, this result was subsumed by Margulis's \cite{Mar} work on the rigidity of higher rank irreducible lattices in Lie groups. More recently, there has been considerable interest in the representations and related properties (such as Kazhdan's property (T)) of $SL_n$ $(n \geq 3)$ and other Chevalley groups of rank $\geqslant 2$ over rings more general than number rings (see, e.g., \cite{EJK}, \cite{KN}, \cite{Shalom}, \cite{Sh}). In particular, motivated by a question of Kazhdan, Shenfield \cite{Sh} has shown that if $n \geq 3$, then for the so-called {\it universal lattice} $\Gamma_{n,k} := SL_n(\Z[X_1, \ldots , X_k])$ and any {\it completely reducible} representation $\rho \colon \Gamma_{n,k} \to GL_m(\C)$, one has, after passing to a suitable finite-index subgroup $\Delta \subset \Gamma_{n,k}$, a factorization
$$
\rho \vert_{\Delta} = (\sigma \circ F) \vert_{\Delta},
$$
where
$
F \colon \Gamma_{n,k} \to SL_n(\C) \times \cdots \times SL_n (\C)
$
is a group homomorphism arising from a specialization map
$
f \colon \Z[x_1, \dots, x_k] \to \C^k,
$
and $\sigma \colon SL_n (\C) \times \cdots \times SL_n (\C) \to GL_m (\C)$ is a morphism of algebraic groups.
His argument used a modification of the approach developed in \cite{BMS}, in conjunction with the fact that the congruence subgroup kernel for $\Gamma_{n,k}$ with $n \geq 3$ is central, which was proved by Kassabov and Nikolov \cite{KN} (see also \cite{CongKer} for a more general result on the centrality of the congruence kernel).

In \cite{IR}, we developed a new framework for the analysis of abstract representations of elementary subgroups of Chevalley groups over general commutative rings based on the notion of algebraic rings. This enabled us to obtain rigidity results for representations of such groups satisfying some natural conditions, which hold true trivially for completely reducible representations (giving a new proof of the result of \cite{Sh}) as well as in many other situations. In this paper, we will verify
these conditions for {\it all} representations of elementary groups over a class of commutative rings $R$ that includes the localizations of 1-generated rings (i.e. those for which there exists a surjection $\Z[X] \to R$ --- see \S\ref{S-6} for further details) as well as the coordinate rings of affine curves over number fields. To give precise statements, we need to describe our set-up.

Let $\Phi$ be a reduced irreducible root system and $G = G_{\Phi}$ the simply-connected Chevalley-Demazure group scheme of type $\Phi$ over $\Z$. For a commutative ring $R$,
denote by $E(\Phi, R)$ the {\it elementary subgroup} of $G(R)$, i.e. the subgroup generated by the $R$-points of the canonical one-parameter root subgroups $e_{\alpha} \colon \mathbb{G}_a \to G$ for all $\alpha \in \Phi.$ In \cite{IR}, we analyzed finite-dimensional representations
$$
\rho \colon E(\Phi, R) \to GL_m (K)
$$
when $K$ is an algebraically closed field of characteristic 0 and $(\Phi, R)$ is a {\it nice pair} (i.e.
$2 \in R^{\times}$ whenever $\Phi$ contains a subsystem of type $\textsf{B}_2$, and $2,3 \in R^{\times}$ if $\Phi$ is of type $\textsf{G}_2$). We showed that in many situations, such a representation
admits a {\it standard description}: namely, there exists a finite-dimensional commutative $K$-algebra $B$, together with a ring homomorphism $f \colon R \to B$ with Zariski-dense image and a morphism of algebraic groups $\sigma \colon G(B) \to GL_m(K)$, such that on a suitable finite-index $\Delta \subset E(\Phi, R)$, we have
\begin{equation}\label{E-StDesc}
\rho \vert_{\Delta} = (\sigma \circ F) \vert_{\Delta},
\end{equation}
where $F \colon E(\Phi, R) \to E(\Phi, B)$ is the group homomorphism induced by $f.$ More precisely, let $$H = \overline{\rho(E(\Phi, R))}$$ be the Zariski-closure of the image, and denote by $H^{\circ}$ and $U$ the connected component of the identity and the unipotent radical of $H^{\circ}$, respectively. As in \cite{IR}, we will say that $H^{\circ}$ {\it satisfies condition} (Z) if

\vskip2mm

\noindent {(Z) \hskip6.2cm $Z(H^{\circ}) \cap U = \{ e \}$,}

\vskip2mm

\noindent where $Z(H^{\circ})$ is the center of $H^{\circ}.$ For example, this is the case if $U$ is commutative
(see [{\it loc. cit.}, Proposition 5.5]). One of our key results in \cite{IR} is that $\rho$ does have a standard description if $H^{\circ}$ satisfies condition (Z) [{\it loc. cit.}, Theorem 6.7].

To formulate our first main result, we will need the following notation. Let $R$ be a commutative ring, $K$ a field, and $g \colon R \to K$ a ring homomorphism. We will denote by $\mathrm{Der}^g (R,K)$ the $K$-vector space of $K$-valued derivations of $R$ with respect to $g$, i.e. an element $\delta \in \mathrm{Der}^g (R,K)$ is a map $\delta \colon R \to K$ such that for any $r_1, r_2, \in R$,
$$
\delta (r_1 + r_2) = \delta (r_1) + \delta(r_2) \ \ \ {\rm and} \ \ \  \delta (r_1 r_2) = \delta(r_1) g(r_2) + g(r_1) \delta (r_2).
$$

\begin{thm}\label{T-MainThm}
Let $K$ be an algebraically closed field of characteristic 0 and $\Phi$ a reduced irreducible root system of rank $\geq 2$.
Suppose $R$ is a commutative ring such that $(\Phi, R)$ is a nice pair, and assume that $\dim_K \mathrm{Der}^g (R,K) \leq 1$ for all ring homomorphisms $g \colon R \to K.$ Then for any representation
$$
\rho \colon E(\Phi, R) \to GL_m (K),
$$
$H^{\circ}$ satisfies condition {\rm (Z)}, and therefore $\rho$ has a standard description.
\end{thm}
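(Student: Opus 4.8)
The entire content of the theorem is the verification of condition (Z): granting it, the standard description for $\rho$ follows from \cite[Theorem 6.7]{IR}. The plan is to run the algebraic-ring machinery of \cite{IR}. Attached to $\rho$ there are a commutative algebraic ring $A$ over $K$ and a ring homomorphism $f\colon R \to A$ with Zariski-dense image, extracted from the Zariski closures of the images $\rho(e_\alpha(R))$ of the root subgroups together with the Chevalley commutator relations (this is where $\mathrm{rank}\,\Phi \ge 2$ and the niceness of $(\Phi,R)$ are used); since $\rho$ is finite-dimensional, $A$ is a finite-dimensional commutative $K$-algebra, so $A = A_1 \times \cdots \times A_s$ with each $A_j$ a local Artinian $K$-algebra with residue field $K$, say with maximal ideal $\n_j$. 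Under the dictionary of \cite{IR} the reduced quotient $A_{\mathrm{red}} = \prod_j A_j/\n_j \cong K^{\,s}$ corresponds to the reductive quotient $H^\circ/U$ and produces ring homomorphisms $g_j\colon R \to K$, while the nilradical $\n = \prod_j \n_j$ governs the unipotent radical $U$; in particular the obstruction to (Z) lives in the part of $U$ built from $\n$.

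The hypothesis enters through the $g_j$. Since $A$ is finite-dimensional, every $K$-linear derivation of $A_j$ relative to $A_j \to K$ is a linear functional, hence continuous for the Zariski topology; so, $f$ having dense image, precomposition with $f$ embeds the space of such derivations --- which is canonically dual to $\n_j/\n_j^2$ --- into $\mathrm{Der}^{g_j}(R,K)$. Thus $\dim_K \n_j/\n_j^2 \le \dim_K \mathrm{Der}^{g_j}(R,K) \le 1$, so by Nakayama's lemma $\n_j$ is principal, and being the maximal ideal of a local Artinian $K$-algebra it follows that $A_j \cong K[z_j]/(z_j^{N_j})$ for some $N_j \ge 1$. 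Hence $A$ is a finite product of truncated polynomial rings; in particular $E(\Phi,A) = G(A)$ (each factor being local), and over such rings $G(A)$, its unipotent radical, and the action of its reductive quotient are completely explicit.

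It remains to deduce (Z) from this structure. Here $U$ is in general non-commutative --- the $K[z_j]/(z_j^{N_j})$ need not be square-zero, so \cite[Proposition 5.5]{IR} does not apply verbatim --- but the explicit shape of $A$ lets one proceed directly: using the Chevalley commutator formula over $K[z]/(z^N)$ and $\mathrm{char}\,K = 0$, one checks that $Z(H^\circ)$ is finite. Indeed, for $A$ a product of truncated polynomial rings the center of $G(A)$ is the group of $A$-points of the (finite) center scheme of the simply connected group $G$, and by the structure theory of normal subgroups of Chevalley groups over such rings this center stays finite after passing to any quotient realizing $H^\circ$; since $U$ is unipotent, hence torsion-free, $Z(H^\circ) \cap U = \{e\}$, which is (Z). Equivalently --- and this is the cleanest way to organize the argument --- a nontrivial element of $Z(H^\circ) \cap U$ would, traced back through the \cite{IR} dictionary, force some $\n_j$ to need at least two generators, hence yield two $K$-linearly independent elements of $\mathrm{Der}^{g_j}(R,K)$, contradicting the hypothesis. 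The hard part is exactly making this correspondence between condition (Z) for $H^\circ$ and the ring-theoretic structure of $A$ (equivalently, the manufacture of a second independent derivation) rigorous \emph{before} the standard description is in hand, together with the bookkeeping needed to pass between $\rho$, the closure $H$, and $G(A)$ across the several components $g_1, \dots, g_s$.
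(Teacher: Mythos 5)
Your first step --- using the derivation hypothesis to force the algebraic ring $B$ attached to $\rho$ to be a product of truncated polynomial rings $K[\varepsilon_i]$ --- is exactly the content of Lemma \ref{L-AlgRing1} and Corollary \ref{C-1genAlgRing} in the paper, and you argue it essentially correctly (the dual basis of $\mathfrak n_j/\mathfrak n_j^2$ pulls back along $f$ to independent $g_j$-derivations of $R$). The problem is everything after that.

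The step where you ``deduce (Z) from this structure'' is circular, and you in fact concede as much in your final sentence. Your argument that $Z(H^\circ)$ is finite presupposes that $H^\circ$ is an isogenous quotient of $G(B)$ (so that finiteness of $Z(G(B))$ for $B$ a product of local artinian rings transfers to $Z(H^\circ)$). But the morphism of algebraic groups $\sigma\colon G(B)\to H^\circ$ is precisely what the standard description \cite[Theorem~6.7]{IR} produces, and that theorem requires (Z) as a hypothesis. Before (Z) is known, what one has is only an abstract surjection $\St(\Phi,B)\twoheadrightarrow H^\circ$, and $K_2(\Phi,B)$ is an enormous central subgroup (for $B=K$ it is Milnor's $K_2(K)$), so no bound on $Z(H^\circ)$ follows. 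Your alternative phrasing --- that an element of $Z(H^\circ)\cap U$ would ``manufacture a second independent derivation'' --- runs into the same obstruction: the dictionary that translates unipotent central directions in $H^\circ$ into generators of a maximal ideal of $B$ is only available after the standard description, not before.

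The paper instead runs a contradiction argument designed precisely to break this circularity. Assuming (Z) fails, one chooses a maximal $H$-normal closed connected $V\subset U$ with $Z(H^\circ/V)\cap(U/V)\neq\{e\}$, replaces $H$ by $H/V$, and sets $W=Z(H^\circ)\cap U$ (a nontrivial vector group). By maximality, $H^\circ/W$ \emph{does} satisfy (Z), so \cite[Theorem~6.7]{IR} applies to $\tilde\rho=\tau\circ\rho$ and supplies a genuine morphism $\tilde\sigma\colon G(\tilde B)\to H/W$, with $\tilde B$ a product of truncated polynomial rings (here the derivation hypothesis enters, as you foresaw). One then pulls back the central extension $1\to W\to H^\circ\to H^\circ/W\to 1$ along $\tilde\sigma$ to obtain a central extension $1\to W\to E\to G(\tilde B)\to 1$ by a vector group. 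The crucial new ingredient --- entirely absent from your proposal --- is Proposition~\ref{P-SplitCE}: any central extension of $G(K[\varepsilon])$, $\varepsilon^d=0$, by a vector group splits (proved via Matsumoto--Stein--van der Kallen generators and relations for $K_2$ together with Lemma~\ref{L-Gopal}). Combined with Lemma~\ref{L-CE}, this splits the pullback abstractly, so $[E,E]\cap W=\{e\}$ while $[E,E]$ is Zariski closed; since $W\neq\{e\}$ this forces $\dim[E,E]<\dim E$, and as $E\to H^\circ$ is an isogeny (Lemma~\ref{L-Isogeny}), $\dim[H^\circ,H^\circ]<\dim H^\circ$, contradicting the perfectness of $H^\circ$ (Lemma~\ref{L-Commutator}). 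Without Proposition~\ref{P-SplitCE} and this bootstrapping device, the argument you sketch cannot get off the ground.
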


\vskip2mm

\noindent {\bf Remark 1.2.}

\vskip1mm

\noindent (a) Note that one only needs to consider representations with infinite image, as otherwise our claim is obvious. If $\rho$ has infinite image, then the $K$-algebra $B$ appearing in the standard description is of the form
$$
B = B_1 \oplus \cdots \oplus B_k,
$$
where $B_i = K[\varepsilon_i]$, with $\varepsilon_i^{d_i} = 0$ for some $d_i \geq 1$ (see Lemma \ref{L-AlgRing1} and Corollary \ref{C-1genAlgRing}). We also note that if there exists an integer $c \geq 1$ such that $cR = \{ 0 \}$, then $\rho (E(\Phi, R))$ is automatically finite for any representation $\rho$ (see \cite[Corollary 4.9]{IR}).

\vskip1mm

\noindent (b) The method of proof of Theorem \ref{T-MainThm} and Remark 3.2(b) suggest that, without additional assumptions on $R$ and $\rho$, the result is likely to be false if $\dim_K \mathrm{Der}^g (R,K) \geq 2.$

\vskip2mm

It is easy to see that if $\mathcal{O}$ is a ring of $S$-integers in a number field, then $\dim_K \mathrm{Der}^g (\mathcal{O}[X],K) \leq 1$ (see \S\ref{S-6} for the details). Furthermore, it follows from a result of Suslin (see \cite[Corollary 6.6]{Suslin}) that if $\Phi$ is of type $\textsf{A}_{\ell}$, with $\ell \geq 2$, then $E(\Phi, \mathcal{O}[X]) = SL_{\ell+1}(\mathcal{O}[X]).$ These observations lead to the following corollary, which provides the first examples of unconditional rigidity statements for finitely generated linear groups that are not arithmetic.



\addtocounter{thm}{1}


\begin{cor}\label{C-1}
Suppose $K$ is an algebraically closed field of characteristic 0, $\Phi$ a reduced irreducible root system of rank $\geq 2$, and $\mathcal{O}$ a ring of $S$-integers in a number field. Let $R$ be a localization of the polynomial ring $\mathcal{O}[X]$ in one variable such that $(\Phi, R)$ is a nice pair. Then any representation
$$
\rho \colon E(\Phi, R) \to GL_m (K)
$$
has a standard description. In particular, if $n \geq 3$, then any representation $$\rho \colon SL_n (\mathcal{O}[X]) \to GL_m (K)$$ has a standard description.
\end{cor}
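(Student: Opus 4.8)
The plan is to obtain Corollary~\ref{C-1} as an immediate application of Theorem~\ref{T-MainThm}: since the niceness of $(\Phi, R)$ is already assumed, the only point to check is that the rings $R$ in the statement satisfy the derivation hypothesis $\dim_K \mathrm{Der}^g(R,K) \leq 1$ for every ring homomorphism $g \colon R \to K$.

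First I would handle $R = \mathcal{O}[X]$. Fix $g \colon \mathcal{O}[X] \to K$ and set $g_0 = g\vert_{\mathcal{O}}$. Since $\mathcal{O}$ is a localization of the ring of integers of a number field $k$, one has $\mathcal{O} \otimes_{\Z} \Q = k$, a finite separable extension of $\Q$. Any $\delta \in \mathrm{Der}^{g_0}(\mathcal{O}, K)$ kills $\Z$, hence (using $\mathrm{char}\,K = 0$, so that $g_0$ extends to $k$) extends to a $g_0$-derivation $k \to K$ that is trivial on $\Q$; as $k/\Q$ is algebraic and separable, every such derivation vanishes, so $\delta = 0$. Therefore every $\delta \in \mathrm{Der}^g(\mathcal{O}[X], K)$ is zero on $\mathcal{O}$ and, by the Leibniz rule, is determined by the single scalar $\delta(X)$, giving $\dim_K \mathrm{Der}^g(\mathcal{O}[X], K) \leq 1$.

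Next, for a localization $R = \mathcal{O}[X][S^{-1}]$ and a homomorphism $g \colon R \to K$, restriction along $\mathcal{O}[X] \hookrightarrow R$ defines a $K$-linear map $\mathrm{Der}^g(R,K) \to \mathrm{Der}^{g\vert}(\mathcal{O}[X], K)$, which is injective: applying $\delta$ to $s \cdot s^{-1} = 1$ gives $\delta(s^{-1}) = -g(s)^{-2}\delta(s)$ for $s \in S$, so a derivation of $R$ is determined by its restriction to $\mathcal{O}[X]$. Combining with the previous step, $\dim_K \mathrm{Der}^g(R,K) \leq 1$, and Theorem~\ref{T-MainThm} applies to yield a standard description of every $\rho \colon E(\Phi, R) \to GL_m(K)$.

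Finally, for the $SL_n$ assertion I would take $\Phi = \textsf{A}_{n-1}$ with $n \geq 3$, so that $\Phi$ has rank $n - 1 \geq 2$; since $\textsf{A}_{n-1}$ contains no subsystem of type $\textsf{B}_2$ and is not of type $\textsf{G}_2$, the pair $(\textsf{A}_{n-1}, \mathcal{O}[X])$ is automatically nice, and by Suslin's theorem $E(\textsf{A}_{n-1}, \mathcal{O}[X]) = SL_n(\mathcal{O}[X])$ \cite[Corollary~6.6]{Suslin}, so the case $R = \mathcal{O}[X]$ of the first part gives the conclusion. The one genuine obstacle here is Theorem~\ref{T-MainThm} itself; granting it, the corollary reduces to the elementary verifications above, the most delicate being the passage from $\mathcal{O}[X]$ to its localizations.
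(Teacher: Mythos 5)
Your proof is correct and follows essentially the same route as the paper: it verifies the derivation bound $\dim_K\mathrm{Der}^g(R,K)\le 1$ directly for $\mathcal{O}[X]$ and propagates it through localization (the paper packages this as Lemma~\ref{L-Examples1}), then invokes Theorem~\ref{T-MainThm} and Suslin's \cite[Corollary~6.6]{Suslin}. The only point the paper makes explicit that you leave implicit is that Suslin's generation result is applied using $SK_1(\mathcal{O})=0$ and $\dim\mathcal{O}=1$, but that is a minor presentational difference rather than a gap.
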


One noteworthy feature of the examples mentioned in Corollary \ref{C-1} is that the restriction $\rho \vert_{E(\Phi, \mathcal{O})}$ is completely reducible (see, e.g., \cite[Corollary 5.2]{IR1}). It turns out that this condition implies the existence of standard descriptions in much greater generality, leading to the following ``relative" version of Theorem \ref{T-MainThm}. To state our result, we will need one additional piece of notation. Suppose $k$ is a commutative ring, $R$ a commutative $k$-algebra, and $g \colon R \to K$ a ring homomorphism. We denote by ${\rm Der}^g_k (R,K) \subset {\rm Der}^g (R,K)$ the $K$-subspace of derivations that vanish on $k.$

\begin{thm}\label{T-MainThm1}
Let $K$ be an algebraically closed field of characteristic 0, $\Phi$ a reduced irreducible root system of rank $\geq 2$,
$k$ a commutative ring such that $(\Phi, k)$ is a nice pair, and $R$ a commutative $k$-algebra. Suppose $\dim_K {\rm Der}_k^g(R, K) \leq 1$ for all ring homomorphisms $g \colon R \to K.$ Then for any representation
$$
\rho \colon E(\Phi, R) \to GL_m (K)
$$
such that the restriction $\rho \vert_{E(\Phi, k)}$ is completely reducible, $H^{\circ}$ satisfies condition {\rm (Z)}, and therefore $\rho$ has a standard description.
\end{thm}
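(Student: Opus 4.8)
The plan is to follow the strategy of \cite{IR}: by \cite[Theorem 6.7]{IR} it suffices to prove that $H^{\circ}$ satisfies condition (Z), i.e. that $Z(H^{\circ}) \cap U = \{e\}$. We may assume $\rho$ has infinite image (the finite-image case being immediate, cf.\ Remark 1.2(a)); and since condition (Z) depends only on $H^{\circ}$, we may pass to the preimage of $H^{\circ}$ in $E(\Phi,R)$ --- a finite-index subgroup still having Zariski-dense image --- and assume $H = H^{\circ}$ is connected. We then bring in the algebraic-ring machinery of \cite{IR}: associated to $\rho$ there are a finite-dimensional algebraic ring $A$ over $K$, a ring homomorphism $f \colon R \to A$ with the requisite density, the induced homomorphism $F \colon E(\Phi,R) \to E(\Phi,A)$, and a comparison between $H^{\circ}$ and $G(A)$; in particular $U$ is built out of the nilpotent part of $A$, and $H^{\circ}/U$ is reductive.

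Now suppose, for contradiction, that $Z := Z(H^{\circ}) \cap U \neq \{e\}$. Since a closed subgroup of a unipotent group in characteristic $0$ is itself unipotent, hence connected, $Z$ is a nontrivial connected abelian unipotent group, central in $H^{\circ}$, so that $H^{\circ}$ is a nontrivial central extension $1 \to Z \to H^{\circ} \to H^{\circ}/Z \to 1$ of connected algebraic groups. Were this extension split, $\rho$ would decompose as $(\rho_{1},\rho_{2})$ with $\rho_{1} \colon E(\Phi,R) \to Z(K)$ a homomorphism into an abelian group; since $E(\Phi,R)$ is perfect (here $\mathrm{rank}\,\Phi \geq 2$ is used), $\rho_{1}$ would be trivial, and then $\overline{\rho(E(\Phi,R))} = H^{\circ}$ would lie in the proper subgroup $\{e\}\times(H^{\circ}/Z)$ --- a contradiction. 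So it is enough to show that this central extension splits.

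This is where complete reducibility of $\rho\vert_{E(\Phi,k)}$ and the derivation bound enter. First, $L := \overline{\rho(E(\Phi,k))}$ is reductive, and since $U$ is normal in $H^{\circ}$, the intersection $L \cap U$ is a connected unipotent normal subgroup of $L$, hence trivial; in particular $L \cap Z = \{e\}$, so $L$ maps isomorphically onto its image in $H^{\circ}/Z$ and the extension above is canonically split over $E(\Phi,k)$. Consequently its class is \emph{relative to $k$}, and using the Chevalley commutator formula $[e_{\alpha}(s),e_{\beta}(t)] = \prod_{i,j\geq 1} e_{i\alpha+j\beta}(c_{ij}\,s^{i}t^{j})$ --- available precisely because non-proportional roots $\alpha,\beta$ with $\alpha+\beta\in\Phi$ exist when $\mathrm{rank}\,\Phi\geq 2$ --- one checks that it is determined by Steinberg-symbol-type data (bi-multiplicative and alternating) on the pairs $(r,r')$, $r,r'\in R$, which degenerates as soon as an entry lies in $k$. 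Evaluating through the homomorphism $g \colon R \to K$ underlying $f$, the class is therefore pinned down by an element of the space of relative differential $2$-forms $\Omega^{2}_{R/k}\otimes_{R,g}K \cong \bigwedge^{2}_{K}\!\bigl(\Omega^{1}_{R/k}\otimes_{R,g}K\bigr)$. Since $\Omega^{1}_{R/k}\otimes_{R,g}K$ is $K$-dual to $\mathrm{Der}^{g}_{k}(R,K)$, it has dimension $\leq 1$ by hypothesis, so its second exterior power vanishes; hence the class is zero and the extension splits. This contradiction shows $Z(H^{\circ})\cap U = \{e\}$, i.e. condition (Z) holds, and Theorem \ref{T-MainThm1} follows from \cite[Theorem 6.7]{IR}. (Theorem \ref{T-MainThm} is obtained by the same scheme, working with $\mathrm{Der}^{g}(R,K)$ in place of $\mathrm{Der}^{g}_{k}(R,K)$.)

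The principal obstacle is this last part: extracting from the \cite{IR} construction the exact shape of the hypothetical central extension and identifying its class with relative differential data, so that the bound $\dim_{K}\mathrm{Der}^{g}_{k}(R,K)\leq 1$ can be brought to bear. This requires a careful interplay among the structure theory of algebraic rings, the commutator calculus in $E(\Phi,R)$ for $\mathrm{rank}\,\Phi\geq 2$, and the complete reducibility of $\rho\vert_{E(\Phi,k)}$ --- which is precisely what forces the differential data to be \emph{relative} and hence controlled by $\mathrm{Der}^{g}_{k}$. It is exactly here that ``$\leq 1$'' is the critical threshold, consistent with Remark 1.2(b).
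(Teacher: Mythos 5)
Your overall strategy aligns with the paper's --- argue by contradiction, exhibit a nontrivial central extension, and show it must split --- but three things are missing or incorrect in ways that matter.

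First, the reduction step. You quotient by $Z = Z(H^{\circ}) \cap U$ directly, but then nothing guarantees that $H^{\circ}/Z$ itself satisfies condition (Z); it may well have new center meeting its unipotent radical. Without that, you cannot invoke \cite[Theorem~6.7]{IR} to obtain a standard description of $\tilde\rho$ and hence cannot produce the algebraic ring $\tilde{B}$ and morphism $\tilde\sigma \colon G(\tilde{B}) \to H^{\circ}/W$ that ground the rest of the argument. The paper first chooses a subgroup $V \subseteq U$, normal in $H$, \emph{maximal} among those with $Z(H^{\circ}/V) \cap (U/V) \neq \{e\}$, and replaces $H$ by $H/V$ so that (\ref{E:EEE1}) and (\ref{E:EEE2}) both hold; only then does $W := Z(H^{\circ})\cap U$ yield a quotient $H^{\circ}/W$ on which (Z) holds.

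Second, the splitting. Your claim that the extension class ``is determined by Steinberg-symbol-type data \dots\ pinned down by an element of $\Omega^{2}_{R/k}\otimes_{R,g}K \cong \bigwedge^{2}(\Omega^{1}_{R/k}\otimes_{R,g}K)$'' is a suggestive heuristic (and Remark~3.2(b) essentially confirms that a two-dimensional cotangent space does obstruct splitting), but it is nowhere near a proof. The paper establishes the splitting through heavier machinery: it identifies the pullback $1 \to W \to E \to G(\tilde{B}) \to 1$ along $\tilde\sigma$, factors $G(\tilde{B}) = \prod G(\tilde{B}_i)$ with $\tilde{B}_i = K[\varepsilon_i]$ (Lemma~\ref{L-AlgRing1}, where the derivation bound is used, together with $\tilde{B}_0 \simeq K\times\cdots\times K$ from complete reducibility via \cite[Lemma~5.7]{IR}), then shows that \emph{each} central extension of $G(K[\varepsilon_i])$ by a vector group over $K$ splits (Proposition~\ref{P-SplitCE}). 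That result in turn rests on Steinberg's universal central extension $\mathrm{St}(\Phi,A)$, van der Kallen's description of $K_2(\Phi,A)$ by Steinberg symbols, and Lemma~\ref{L-Gopal}, which exploits the explicit structure of $R_{A/K}(\mathbb{G}_m) = \mathbb{G}_m \times U$ when $A=K[\varepsilon]$. None of this is present in your sketch, and making the $\Omega^2$ heuristic precise would itself require essentially the same amount of work.

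Third, the final contradiction. The splitting the paper obtains is only \emph{abstract}, not a splitting of algebraic groups, so one cannot directly conclude that $\rho$ lands in a proper closed subgroup. Instead the paper argues: $[E,E]\cap W = \{e\}$, and $[E,E]$ is Zariski-closed, so $\dim[E,E] < \dim E$; since $\theta$ is an isogeny (Lemma~\ref{L-Isogeny}), $\dim[H^{\circ},H^{\circ}] < \dim H^{\circ}$, contradicting $H^{\circ} = [H^{\circ},H^{\circ}]$ (Lemma~\ref{L-Commutator}). Your version, which projects $\rho$ to the abelian factor and invokes perfection, would be fine \emph{if} you had an algebraic splitting, but you have not actually produced one.
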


As a concrete example, let us mention the following consequence of Theorem \ref{T-MainThm1}, which is referred to in the title of the paper (see Theorem \ref{T-CurveRings} and subsequent remarks for a more general statement).

\begin{thm}\label{P-MainProp1}
Let $K$ be an algebraically closed field of characteristic 0 and $\Phi$ a reduced irreducible root system of rank $\geq 2.$
Suppose $C$ is a smooth affine irreducible curve defined over a number field $k$ with coordinate ring $R = k[C].$
Then any representation $\rho \colon E(\Phi, R) \to GL_m (K)$ has a standard description.
\end{thm}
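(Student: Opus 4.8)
\medskip
\noindent\textit{Proof idea.}
The plan is to realize Theorem~\ref{P-MainProp1} as an instance of the relative rigidity statement Theorem~\ref{T-MainThm1}, taking for the base ring the number field $k$ itself and regarding $R = k[C]$ as a $k$-algebra. Since $\mathrm{char}\,k = 0$, we have $2, 3 \in k^{\times} \subset R^{\times}$, so $(\Phi, k)$ is a nice pair for every reduced irreducible $\Phi$; thus it suffices to verify the two hypotheses of Theorem~\ref{T-MainThm1}, namely (i) the bound $\dim_K \mathrm{Der}^g_k(R, K) \leq 1$ for every ring homomorphism $g \colon R \to K$, and (ii) that the restriction $\rho\vert_{E(\Phi, k)}$ is completely reducible.

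For (i) I would invoke the universal property of Kähler differentials: for a ring homomorphism $g \colon R \to K$, viewing $K$ as an $R$-module via $g$, there are natural identifications $\mathrm{Der}^g_k(R, K) \cong \mathrm{Hom}_R(\Omega^1_{R/k}, K) \cong \mathrm{Hom}_K(\Omega^1_{R/k} \otimes_R K, K)$. Since $C$ is a smooth affine curve over the field $k$, the $R$-module $\Omega^1_{R/k}$ is locally free (hence projective) of rank $1$; consequently $\Omega^1_{R/k} \otimes_R K$ is a $K$-vector space of dimension $\leq 1$ (in fact exactly $1$), and the bound follows. It is worth emphasizing that the argument is uniform over \emph{all} ring homomorphisms $g$: whether $\ker g = (0)$, so that $g$ embeds $R$ into $K$ and one is computing the derivations of the function field $k(C)$ (of transcendence degree $1$ over $k$), or $\ker g$ is a maximal ideal, corresponding to a specialization of $\rho$ at a closed point of $C$, the local freeness of $\Omega^1_{R/k}$ of rank $1$ forces every fibre to be one-dimensional. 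Thus the curve being \emph{smooth} is precisely the input that makes (i) work, and this is the only point of the argument that requires any real thought.

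For (ii), since $k$ is a field one has $E(\Phi, k) = G(k)$, and every finite-dimensional representation of $G(k)$ over an algebraically closed field of characteristic $0$ is completely reducible when $k$ is a number field --- this is of the same nature as the complete reducibility of $\rho\vert_{E(\Phi,\mathcal{O})}$ recorded just after Corollary~\ref{C-1}, and rests on \cite[Corollary 5.2]{IR1} together with the structure of $G(k)$ as an abstract group (it is perfect, generated by its unipotent elements, and abstractly rigid in the sense of Borel--Tits). With (i) and (ii) in hand, Theorem~\ref{T-MainThm1} applies verbatim and produces a standard description of $\rho$, so I anticipate no genuine obstacle beyond assembling these ingredients --- the substantive content is already packaged into Theorem~\ref{T-MainThm1}. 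Finally, I note that hypothesis (ii) can in fact be circumvented altogether: a number field is algebraic, hence separable, over $\Q$, which admits no nonzero derivations into $K$, so $\mathrm{Der}^g_k(R, K) = \mathrm{Der}^g(R, K)$ for $R = k[C]$, and one may appeal to Theorem~\ref{T-MainThm} directly.
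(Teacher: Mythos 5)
Your proof follows the paper's own route. Proposition~\ref{P-RingCurve} establishes the derivation bound by exactly your Kähler-differential argument (the paper localizes at $\ker g$ and uses that $\Omega_{\mathcal{O}_{C,x}/k}$ is free of rank~$1$ by smoothness of $C$, which is the pointwise version of your statement that $\Omega^1_{R/k}$ is locally free of rank~$1$), and then Theorem~\ref{T-CurveRings} together with Remark~6.4 (which cites \cite[Corollary 5.2]{IR1}) pushes this through Theorem~\ref{T-MainThm1} using complete reducibility of $\rho\vert_{E(\Phi,k)}$, exactly as you propose. Your closing remark --- that since $k/\Q$ is finite separable, every $g$-derivation of $R$ into $K$ automatically kills $k$, so $\mathrm{Der}^g(R,K)=\mathrm{Der}^g_k(R,K)$ and the absolute Theorem~\ref{T-MainThm} already applies, bypassing the complete reducibility hypothesis --- is correct and is not spelled out in the paper; it is a small but genuine streamlining of the number-field case.
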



The proofs of Theorems \ref{T-MainThm} and \ref{T-MainThm1} proceed along similar lines and both
rely on a result asserting that certain central extensions of the group
$G(K[\varepsilon])$, where $\varepsilon^d = 0$ for some $d \geq 1$, split --- see Proposition \ref{P-SplitCE}. We will establish this statement in \S\ref{S-3} after discussing some basic facts about central extensions in \S\ref{S-2}. In \S\ref{S-4}, we quickly review the required results from our previous work on rigidity via algebraic rings, and then complete the proofs of Theorems \ref{T-MainThm} and \ref{T-MainThm1} in \S\ref{S-5}. In \S\ref{S-6}, we discuss a number of examples of rings satisfying the assumptions of our main results, which go far beyond the rings mentioned in Corollary \ref{C-1}, and include, for instance, coordinate rings of smooth affine algebraic curves defined over number fields, leading, in particular, to Theorem \ref{P-MainProp1}.

\vskip2mm

\noindent {\bf Notations and conventions.} We will denote by $\Phi$ a reduced irreducible root system of rank $\geq 2.$ All rings considered in this paper will be assumed to be commutative and unital. As mentioned earlier, we will say that $(\Phi, R)$ is a {\it nice pair} if $2 \in R^{\times}$ whenever $\Phi$ contains a subsystem of type $\textsf{B}_2$, and $2,3 \in R^{\times}$ if $\Phi$ is of type $\textsf{G}_2.$ Throughout the paper, we will use the standard notations $\mathbb{G}_a$ and $\mathbb{G}_m$ for the additive and multiplicative groups (over the relevant base), respectively. Finally, given a representation $\rho \colon E(\Phi, R) \to GL_n (K)$, we let $H = \overline{\rho (E(\Phi, R))}$ be the Zariski closure of the image, and set $H^{\circ}$ to be the connected component of the identity.

\vskip2mm

\noindent {\bf Acknowledgements.} I would like to thank Brian Conrad, Ofer Gabber, and Gopal Prasad for useful discussions and correspondence about central extensions. I would also like to thank Alex Lubotzky for his interest in this work. 

I began my study of rigidity questions for Chevalley groups over arbitrary rings as a graduate student at Yale University under the direction of Professor Gregory A.~Margulis. I would like to thank him for introducing me to this field, and for his support and continued interest in my work. 

I was supported by an NSF Postdoctoral Fellowship at Harvard University during the preparation of this paper.

\section{A brief review of central extensions of Chevalley groups}\label{S-2}

Let $\mathcal{G}$ be an arbitrary group. Recall that a central extension
$$
1 \to C \to E \stackrel{\pi}{\longrightarrow} \mathcal{G} \to 1
$$
is said to be {\it universal} if for any central extension
$$
1 \to C' \to E' \stackrel{\pi'}{\longrightarrow} \mathcal{G} \to 1
$$
there exists a {\it unique} homomorphism $\varphi \colon E \to E'$ making the following diagram commute
$$
\xymatrix{1 \ar[r] & C \ar[r] \ar[d]^{\varphi} & E \ar[r]^{\pi} \ar[d]^{\varphi} & \mathcal{G} \ar[r] \ar[d]^{{\rm id}} & 1 \\ 1 \ar[r] & C' \ar[r] & E' \ar[r]^{\pi'} & \mathcal{G} \ar[r] & 1}
$$
It is well-known that a group $\mathcal{G}$ admits a universal central extension if and only if it is {\it perfect} (i.e. $\mathcal{G}$ is equal to its commutator subgroup $[\mathcal{G}, \mathcal{G}]$), and the universal central extension is unique up to unique isomorphism (see, e.g., \cite[Chapter 5]{Mil}). For later use, we also note the following elementary statement.
\begin{lemma}\label{L-CE}
Let $\mathcal{G}_1$ and $\mathcal{G}_2$ be perfect groups. Suppose all central extensions of $\mathcal{G}_1$ and $\mathcal{G}_2$ split. Then any central extension of $\mathcal{G} := \mathcal{G}_1 \times \mathcal{G}_2$ also splits.
\end{lemma}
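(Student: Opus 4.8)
The plan is to use the universal central extensions of $\mathcal{G}_1$ and $\mathcal{G}_2$ as the main tool, together with the fact that the direct product of perfect groups is perfect. Since $\mathcal{G}_1$ and $\mathcal{G}_2$ are perfect, so is $\mathcal{G} = \mathcal{G}_1 \times \mathcal{G}_2$, hence $\mathcal{G}$ admits a universal central extension $1 \to C \to \widetilde{\mathcal{G}} \stackrel{\pi}{\to} \mathcal{G} \to 1$; and a standard fact is that every central extension of $\mathcal{G}$ splits if and only if this universal central extension splits, i.e. if and only if $C = \{e\}$. So it suffices to show that the universal central extension of $\mathcal{G}_1 \times \mathcal{G}_2$ is trivial. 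By hypothesis all central extensions of $\mathcal{G}_i$ split, so by the same criterion the universal central extension of each $\mathcal{G}_i$ is trivial.

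The key step is therefore to identify the universal central extension of a direct product. First I would show that if $\widetilde{\mathcal{G}}_i \to \mathcal{G}_i$ is the universal central extension of $\mathcal{G}_i$ (with kernel $C_i$), then $\widetilde{\mathcal{G}}_1 \times \widetilde{\mathcal{G}}_2 \to \mathcal{G}_1 \times \mathcal{G}_2$ is the universal central extension of the product, with kernel $C_1 \times C_2$. Existence of a morphism from the universal central extension $\widetilde{\mathcal{G}}$ of the product into $\widetilde{\mathcal{G}}_1 \times \widetilde{\mathcal{G}}_2$ is automatic; for the converse direction one uses the universal property of each $\widetilde{\mathcal{G}}_i$ applied to the pullback of $\widetilde{\mathcal{G}} \to \mathcal{G}$ along the inclusion $\mathcal{G}_i \hookrightarrow \mathcal{G}$, together with the fact that $\widetilde{\mathcal{G}}_1 \times \widetilde{\mathcal{G}}_2$ is perfect (being a product of perfect groups — note $\widetilde{\mathcal{G}}_i$ is perfect as the universal central extension of a perfect group), so that the two maps between these universal objects compose to the identity by uniqueness. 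Granting this, $C = C_1 \times C_2 = \{e\} \times \{e\} = \{e\}$, and we are done.

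Alternatively, and perhaps more cleanly, one can argue directly with a fixed central extension $1 \to C \to E \stackrel{\pi}{\to} \mathcal{G}_1 \times \mathcal{G}_2 \to 1$: pull back along each inclusion $\mathcal{G}_i \hookrightarrow \mathcal{G}$ to get central extensions $E_i \to \mathcal{G}_i$, which split by hypothesis, yielding homomorphic sections $s_i \colon \mathcal{G}_i \to E$ lifting the inclusions. One then checks that the images $s_1(\mathcal{G}_1)$ and $s_2(\mathcal{G}_2)$ commute in $E$: for $x \in \mathcal{G}_1$, $y \in \mathcal{G}_2$, the commutator $[s_1(x), s_2(y)]$ lies in $\ker \pi = C$, and the map $(x,y) \mapsto [s_1(x), s_2(y)]$ is bicharacter-like; since $\mathcal{G}_1$ is perfect, $x$ is a product of commutators, and a central-valued commutator pairing with a perfect group in the first argument is trivial. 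Hence $s(x,y) := s_1(x) s_2(y)$ is a well-defined splitting homomorphism of the original extension.

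I expect the main obstacle to be the bookkeeping in showing the sections $s_1, s_2$ have commuting images — that is, verifying that the $C$-valued commutator pairing vanishes by exploiting perfectness of the factors and centrality of $C$. This is the one place where a genuine (if short) argument is needed rather than a formal invocation of universal properties; everything else is either standard (the perfectness/universal-central-extension dictionary from \cite[Chapter 5]{Mil}) or purely diagrammatic.
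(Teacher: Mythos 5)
Your second, ``alternative'' argument is essentially the paper's own proof. The paper restricts the given central extension to each factor, obtains splittings $\varphi_i \colon \mathcal{G}_i \to E$ of the restricted extensions, and then shows $\varphi_1(\mathcal{G}_1)$ and $\varphi_2(\mathcal{G}_2)$ commute by noting that $[\varphi_1(g_1),\varphi_2(g_2)]$ lies in the central kernel $C$, that (via the identity $[u,v][u,w]=[u,vw][v,[w,u]]$) the map $g_2 \mapsto [\varphi_1(g_1),\varphi_2(g_2)]$ is a homomorphism into the abelian group $C$ for each fixed $g_1$, and that such a homomorphism must vanish because $\mathcal{G}_2$ is perfect --- exactly the commutator/perfectness step you correctly flagged as the crux (the paper uses perfectness of $\mathcal{G}_2$ rather than $\mathcal{G}_1$, but this is immaterial). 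Your first route through universal central extensions is also sound, but it is heavier: to show $\widetilde{\mathcal{G}}_1 \times \widetilde{\mathcal{G}}_2$ is universal for $\mathcal{G}_1\times\mathcal{G}_2$ one either appeals to the K\"unneth decomposition of $H_2$ (which reduces to $H_2(\mathcal{G}_1\times\mathcal{G}_2)\cong H_2(\mathcal{G}_1)\oplus H_2(\mathcal{G}_2)$ because the $\mathcal{G}_i$ are perfect, so $H_1(\mathcal{G}_i)=0$ kills the cross term), or, if done purely by universal properties as you sketch, one must still glue the two lifts $\widetilde{\mathcal{G}}_i \to \widetilde{\mathcal{G}}$ into a single homomorphism from the product, which requires precisely the same ``images commute'' argument and so buys nothing. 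The direct argument is shorter and self-contained, which is presumably why the paper adopts it.
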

\begin{proof}
Consider a central extension
\begin{equation}\label{E-LCE1}
1 \to C \to E \stackrel{\pi}{\longrightarrow} \mathcal{G} \to 1.
\end{equation}
Restricting to each factor of $\mathcal{G}$, we obtain central extensions of $\mathcal{G}_1$ and $\mathcal{G}_2$ with splittings $\varphi_1 \colon \mathcal{G}_1 \to E$ and $\varphi_2 \colon \mathcal{G}_2 \to E.$ Then
$$
\varphi \colon \mathcal{G} \to E, \ \ \ (g_1, g_2) \mapsto \varphi_1(g_1) \varphi_2 (g_2)
$$
gives a splitting of (\ref{E-LCE1}). Indeed, it is enough to show
that
$\varphi_1 (\mathcal{G}_1)$ and $\varphi_2 (\mathcal{G}_2)$ commute inside $E$.  For this we observe that for any $g_1 \in
\mathcal{G}_1$ and $g_2 \in \mathcal{G}_2$, we have $[\varphi_1(g_1) , \varphi_2(g_2)] \in C$, and then the centrality of $C$, combined with the well-known commutator identity
$
[u,v][u,w] = [u, vw][v, [w,u]],
$
implies that for every {\it fixed} $g_1 \in \mathcal{G}_1$, the map
$$
\psi_{g_1} \colon \mathcal{G}_2 \to C, \ \ g_2 \mapsto [\varphi_1(g_1) , \varphi_2(g_2)],
$$
is a group homomorphism.
Since $\mathcal{G}_2$ is perfect, we obtain that $\psi_{g_1}$ is trivial for all $g_1 \in \mathcal{G}_1$, and the required fact follows.

\end{proof}

Next, let $\Phi$ be a reduced irreducible root system of rank $\geq 2$ and $G = G_{\Phi}$ be the corresponding simply-connected Chevalley-Demazure group scheme over $\Z$; for each $\alpha \in \Phi$, denote by $e_{\alpha} \colon \mathbb{G}_a \to G$ the canonical 1-parameter root subgroup. If $S$ is any commutative ring, then the elements $e_{\alpha} (s)$, for $\alpha \in \Phi$ and $s \in S$, are known to satisfy the following relations (see \cite[Chapter 3]{Stb1}):
\begin{equation}\label{E:StG103}
e_{\alpha} (s) e_{\alpha} (t) = e_{\alpha} (s+t)
\end{equation}
for all $s,t \in S$ and all $\alpha \in \Phi,$ and
\begin{equation}\label{E:StG104}
[e_{\alpha} (s), e_{\beta} (t)] = \prod e_{i \alpha + j \beta} (N^{i,j}_{\alpha, \beta} s^i t^j),
\end{equation}
for all $s,t \in S$ and all $\alpha, \beta \in \Phi,$ $\beta \neq - \alpha,$
where the product is taken over all roots of the form $i \alpha + j \beta,$ $i, j \in \Z^+$, listed in an arbitrary (but {\it fixed}) order, and the $N^{i,j}_{\alpha, \beta}$ are integers depending only on  $\Phi$ and the order of the factors in (\ref{E:StG104}), but not on the ring $S$.

Recall that the {\it Steinberg group} ${\rm St}(\Phi,S)$ is the group with generators $\tilde{x}_{\alpha} (t)$, for all $t \in S$ and $\alpha \in \Phi$, subject to relations analogous to (\ref{E:StG103}) and (\ref{E:StG104}).
%
%
%
%
%
By construction, there exists a surjective group homomorphism
$
\pi_S \colon \St(\Phi, S) \to E(\Phi, S)
$
that maps $\tilde{x}_{\alpha} (t)$ to $e_{\alpha} (t)$,
and one defines
$
K_2 (\Phi, S) = \ker \pi_S.
$
We will need the following special case of a result of M.~Stein \cite{St1}.

\begin{prop}\label{P-Stein1}{\rm (cf. \cite[Corollary 4.4]{St1})}
Let $A$ be a commutative finite-dimensional algebra over a field $F$ of characteristic 0 and $\Phi$ a reduced irreducible root system of rank $\geq 2.$ Then the groups $E(\Phi, A)$ and $\St (\Phi, A)$ are perfect.
\end{prop}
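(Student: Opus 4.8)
The plan is to reduce to Stein's original result by checking that a finite-dimensional commutative algebra $A$ over a field $F$ of characteristic $0$ is a semilocal ring whose residue fields at maximal ideals are "large enough" relative to the rank of $\Phi$. Concretely, $A$ is Artinian (being finite-dimensional over $F$), hence a finite product $A = \prod_{i=1}^s A_i$ of local Artinian $F$-algebras $A_i$ with maximal ideals $\m_i$ and residue fields $k_i = A_i/\m_i$. Since $\mathrm{char}\,F = 0$, each $k_i$ is a field of characteristic $0$, and in particular $|k_i| = \infty$; moreover $k_i$ contains $\mathbb{Q}$, so that the "stable range" and unit-availability hypotheses in Stein's theorem (as in \cite[Corollary 4.4]{St1}) are satisfied — for a semilocal ring these conditions amount to each residue field having more than a bounded number of elements, which is automatic here. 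Thus $A$ falls within the class of rings to which Stein's perfectness and stability results apply, giving that $\St(\Phi, A)$ is perfect; since $\pi_A \colon \St(\Phi,A) \to E(\Phi,A)$ is surjective, $E(\Phi,A)$ is perfect as well.

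First I would record the structural decomposition $A = \prod_i A_i$ and observe that both $\St(\Phi, -)$ and $E(\Phi,-)$ are compatible with finite products of rings (this is immediate from the defining generators and relations \eqref{E:StG103}--\eqref{E:StG104}, which are evaluated coordinatewise), so it suffices to treat each local factor $A_i$ separately. Next I would invoke Stein's theorem for the local ring $A_i$: for a reduced irreducible root system $\Phi$ of rank $\geq 2$ over a local (or more generally semilocal) ring with infinite residue field, $\St(\Phi, A_i)$ is generated by its commutators — one uses the root subgroup relations together with the existence of units $t \in A_i^\times$ with $t^2 - 1 \in A_i^\times$ (available since $k_i \supseteq \mathbb{Q}$) to write each generator $\tilde{x}_\alpha(s)$ as a commutator, exactly as in the classical computation $[\tilde h_\alpha(t), \tilde x_\alpha(s)] = \tilde x_\alpha((t^2-1)s)$ for ranks permitting the relevant $\mathsf{SL}_2$ or $\mathsf{SL}_3$ configuration. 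Finally, perfectness of $E(\Phi, A) = \prod_i E(\Phi, A_i)$ follows since a finite product of perfect groups is perfect, and the surjection $\pi_A$ transports perfectness from $\St(\Phi,A)$ down to $E(\Phi,A)$.

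The only real point requiring care — and the main potential obstacle — is confirming that the hypotheses of \cite[Corollary 4.4]{St1} are genuinely met in the generality we need: Stein's statement is phrased for certain classes of rings (semilocal rings satisfying stable-range and unit conditions), and one must verify that "finite-dimensional commutative $F$-algebra, $\mathrm{char}\,F = 0$" lands inside that class. This is not hard, but it is the step where I would be most careful: the key input is that characteristic $0$ forces every residue field to be infinite (indeed to contain $\mathbb{Q}$), which supplies all the units needed in Stein's commutator computations and makes the stable-range bounds trivially satisfied for an Artinian — hence semilocal — ring. Once that verification is in place, everything else is formal.
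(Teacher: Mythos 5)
The paper offers no proof here: Proposition~\ref{P-Stein1} is stated with the attribution ``(cf.~\cite[Corollary~4.4]{St1})'' and the reader is expected to verify that $A$ lands in the class of rings Stein handles. Your proposal does exactly that, and in the expected way: $A$ is Artinian, hence semilocal, with residue fields of characteristic $0$ and therefore infinite, which is what Stein's hypotheses require. Two minor remarks. The claim that $\St(\Phi,-)$ commutes with finite products of rings ``immediate[ly] from the defining generators and relations'' is a real lemma rather than an automatic fact: the commutator relation~(\ref{E:StG104}) says nothing about pairs of opposite roots, so one must separately argue that $\tilde{x}_{\alpha}(s,0)$ and $\tilde{x}_{-\alpha}(0,t)$ commute in $\St(\Phi, A\times B)$; Stein proves this, but it takes an argument. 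And --- which renders the previous point moot --- the decomposition into local factors is not actually needed for perfectness: since ${\rm char}\,F=0$, the elements $2$ and $3 = 2^2-1$ are already units in $A$ itself, so the relation
\[
\tilde{x}_{\alpha}(s) = \bigl[\tilde{h}_{\alpha}(2),\ \tilde{x}_{\alpha}(s/3)\bigr]
\]
(the same identity from \cite[3.8]{St1} invoked later in \S\ref{S-3} of the paper) already exhibits every generator of $\St(\Phi,A)$ as a commutator. That gives perfectness of $\St(\Phi,A)$ directly, and $E(\Phi,A)$ inherits it via the surjection $\pi_A$, with no product decomposition or stable-range verification required.
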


\noindent It follows from Proposition \ref{P-Stein1} and the above remarks that $E(\Phi, A)$ has a universal central extension. More precisely, imitating the proof of \cite[Theorem 10, pg 78]{Stb1} given by Steinberg in the case of fields, one proves
\begin{prop}\label{P-UCEStb}
Suppose $F$ is a field of characteristic 0, $A$ a commutative finite-dimensional local $F$-algebra, and $\Phi$ a reduced irreducible root system of rank $\geq 2.$ Then
$$
1 \to K_2 (\Phi, A) \to \St (\Phi, A) \stackrel{\pi_A}{\longrightarrow} E(\Phi, A) \to 1
$$
is a universal central extension of $E(\Phi, A).$
\end{prop}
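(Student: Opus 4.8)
The plan is to adapt Steinberg's proof of the field case, \cite[Theorem 10, pg 78]{Stb1}, checking that each step invoking the hypothesis ``$k$ is a field'' carries over to a finite-dimensional local $F$-algebra $A$ with $F$ of characteristic $0$. Since $E(\Phi, A)$ and $\St(\Phi, A)$ are perfect by Proposition \ref{P-Stein1}, $E(\Phi, A)$ has a universal central extension, unique up to unique isomorphism. To prove that $\pi_A$ realizes it, it suffices to establish two facts: (a) $K_2(\Phi, A) = \ker \pi_A$ is central in $\St(\Phi, A)$; and (b) every central extension of $\St(\Phi, A)$ splits. Indeed, granting (a), $\pi_A$ is a central extension; and granting (b), for an arbitrary central extension $q \colon E' \to E(\Phi, A)$ one pulls back along $\pi_A$ to a central extension of $\St(\Phi, A)$, splits it by (b), and composes to obtain a homomorphism $\St(\Phi, A) \to E'$ over $E(\Phi, A)$, which is unique since $\St(\Phi, A)$ is perfect; thus $\pi_A$ is universal.

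For (a), I would first note that $\pi_A$ restricts to isomorphisms of the subgroups $\tilde U^{\pm} \subset \St(\Phi, A)$ generated by the root subgroups $\tilde x_\alpha(A)$ with $\alpha$ positive, respectively negative, onto the corresponding subgroups of $E(\Phi, A)$ --- both groups being presented by the Chevalley commutator relations among root elements of a fixed sign. Since $A$ is local, its unit group $A^{\times} = A \setminus \m$ is large, so one has the elements $\tilde w_\alpha(u)$, $\tilde h_\alpha(u)$ for $u \in A^{\times}$ and a Bruhat-type decomposition of $\St(\Phi, A)$ compatible via $\pi_A$ with the one in $E(\Phi, A)$; this forces any $z \in \ker \pi_A$ into the subgroup $\tilde H$ generated by the $\tilde h_\alpha(u)$. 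As $\tilde H$ normalizes each $\tilde x_\alpha(A)$ and $\pi_A(z) = e$, injectivity of $\pi_A$ on $\tilde U^{\pm}$ gives $z\, \tilde x_\alpha(t)\, z^{-1} = \tilde x_\alpha(t)$ for all $\alpha, t$, whence $z \in Z(\St(\Phi, A))$ since these elements generate $\St(\Phi, A)$. (Alternatively, the centrality of $\ker \pi_A$ when $A$ has stable rank $1$ can be read off from the results of Stein cited in Proposition \ref{P-Stein1}.)

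For (b), let $1 \to C \to X \stackrel{p}{\longrightarrow} \St(\Phi, A) \to 1$ be a central extension. Using that $\Phi$ is irreducible of rank $\geq 2$, for each $\alpha \in \Phi$ one expresses $\tilde x_\alpha(t)$ in terms of commutators of elements $\tilde x_\beta(\cdot)$ (a single commutator when $\alpha$ admits a ``multiplicity-free'' decomposition $\alpha = \beta + \gamma$ into roots, and a short product of commutators otherwise); since $C$ is central, the commutator in $X$ of two lifts depends only on the elements commuted, so these expressions yield lifts $y_\alpha(t) \in X$ of $\tilde x_\alpha(t)$. The heart of the argument is to verify that the $y_\alpha(t)$ satisfy the defining relations of $\St(\Phi, A)$ --- additivity in $t$ and the commutator formula (\ref{E:StG104}) --- after which $\tilde x_\alpha(t) \mapsto y_\alpha(t)$ extends to a homomorphism $\St(\Phi, A) \to X$ splitting $p$. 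Following Steinberg, this is carried out by combining commutator identities valid in $X$ modulo $C$ with the relations already known in $\St(\Phi, A)$, each ``error term'' produced lying in $C$ and being killed by a perfectness argument. I expect this verification to be the main obstacle: it is exactly here that Steinberg's theorem acquires its exceptions over small fields, and one must invoke that $A$, being a local algebra over a field of characteristic $0$, has infinitely many elements and a large unit group --- together with the ring-independence of the structure constants $N^{i,j}_{\alpha, \beta}$ --- to ensure that no such exceptions arise and that the computations proceed as over an infinite field.
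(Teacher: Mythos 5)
Your proposal takes essentially the same route as the paper. The paper gives no details for Proposition \ref{P-UCEStb}: it simply says that one imitates Steinberg's proof of \cite[Theorem 10, pg.~78]{Stb1}, which is exactly the plan you execute. Your reduction to (a) centrality of $K_2(\Phi,A)$ and (b) triviality of all central extensions of $\St(\Phi,A)$ is precisely the shape of Steinberg's argument, and your diagnosis of where the field hypothesis enters (small-field exceptions, needing a rich unit group) and why a finite-dimensional local $F$-algebra with $\mathrm{char}\,F=0$ removes those obstacles is correct.

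Two small corrections worth flagging. First, your direct Bruhat-decomposition argument for (a) is shakier over a local ring than over a field: the Bruhat ``cells'' are no longer disjoint once $A$ has a nontrivial maximal ideal, so the step ``$z\in\ker\pi_A$ is forced into $\tilde H$'' does not follow by the naive uniqueness-of-decomposition argument. This is moot, however, because centrality of $K_2(\Phi,S)$ for $S$ semilocal with infinite residue fields is a theorem of Stein, which the paper already records in \S\ref{S-2} (and Theorem \ref{T-vdK}); a finite-dimensional local $F$-algebra is such a ring. Second, the citation you mean in your ``Alternatively'' sentence is Stein's \cite[Theorem 2.13]{St2} (surjective stability/centrality of $K_2$), not Proposition \ref{P-Stein1}, which only gives perfectness. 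With those attributions straightened out, your sketch matches what the paper intends.
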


The structure of $K_2 (\Phi, S)$ has been described in terms of generators and relations by Matsumoto \cite{M1} when $S$ is an infinite field, and was later extended by van der Kallen \cite{vdK} to ``rings with many units." Before formulating the precise result that we will need, we recall some standard notations.

Let $S$ be an arbitrary commutative ring. For
any root $\alpha \in \Phi$ and $u \in S^{\times}$, we let $\tilde{w}_{\alpha} (u)$ and $\tilde{h}_{\alpha} (u)$ be the usual elements of $\St(\Phi, S)$ defined by
\begin{equation}\label{E-Stb1}
\tilde{w}_{\alpha} (u) = \tilde{x}_{\alpha} (u) \tilde{x}_{-\alpha} (-u^{-1}) \tilde{x}_{\alpha} (u) \ \ \ \text{and} \ \ \ \tilde{h}_{\alpha}(u) = \tilde{w}_{\alpha}(u) \tilde{w}_{\alpha} (-1).
\end{equation}
We also let
\begin{equation}\label{E-Stb2}
w_{\alpha} (u) = \pi_S (\tilde{w}_{\alpha} (u)) \ \ \ \text{and} \ \ \ h_{\alpha}(u) = \pi_S (\tilde{h}_{\alpha}(u))
\end{equation}
denote the corresponding elements of $E(\Phi, S).$
The {\it Steinberg symbol} associated to $\alpha \in \Phi$ is the element
$$
(u, v)_{\alpha} = \tilde{h}_{\alpha} (u) \tilde{h}_{\alpha} (v) (\tilde{h}_{\alpha} (uv))^{-1},
$$
for $u, v \in S^{\times}.$ From the presentation of Chevalley groups by generators and relations given in \cite[\S 6]{Stb1}, it is clear that all Steinberg symbols are contained in $K_2 (\Phi, S)$;
moreover, if $S$ is a semilocal ring with infinite residue fields, then $K_2 (\Phi, S)$ is a central subgroup of $\St (\Phi, S)$ generated by Steinberg symbols taken with respect to any long root $\alpha \in \Phi$ (see \cite[Theorem 2.13]{St2}).

Now, for an arbitrary commutative ring $S$ and any root $\alpha \in \Phi$, the Steinberg symbols are known to satisfy the following relations (see, e.g., \cite[3.2]{vdK} for references):

\vskip2mm

\noindent (a) $(x, y)_{\alpha} (xy, z)_{\alpha} = (x, yz)_{\alpha} (y,z)_{\alpha}$

\vskip1mm

\noindent (b) $(1,1)_{\alpha} = 1$

\vskip1mm

\noindent (c) $(x,y)_{\alpha} = (x^{-1}, y^{-1})_{\alpha}$

\vskip1mm

\noindent (d) $(x,y)_{\alpha} = (x, -xy)_{\alpha}$

\vskip1mm

\noindent (e) $(x,y)_{\alpha} = (x, (1-x)y)_{\alpha}$ if $x, 1-x, y \in S^{\times}$

\vskip2mm

\noindent One can show that these relations imply
$
(1, x)_{\alpha} = (x, 1)_{\alpha} = 1
$
and
$
(x,y)_{\alpha} = (y^{-1}, x)_{\alpha}
$
for all $x, y \in S^{\times}$ (notice that the former leads to the familiar relation $(x, 1-x)_{\alpha} = 1$). Furthermore, if $\Phi$ is of type different from $\textsf{C}_{\ell}$, then for any $\alpha \in \Phi,$ we have the relation

\vskip1mm

\noindent (g) $(x, yz)_{\alpha} = (x,y)_{\alpha} (x, z)_{\alpha}$.

\vskip2mm

\noindent For ease of reference, we record some special cases of Theorems 3.4 and 3.7 of \cite{vdK}.

\begin{thm}\label{T-vdK}
Suppose $S$ is a commutative semilocal ring with infinite residue fields and $\Phi$ is a reduced irreducible root system of rank $\geq 2.$ Then $K_2(\Phi, S)$ is a central subgroup of $\St(\Phi, S)$ generated by the Steinberg symbols $(x,y)_{\alpha_0}$ for some \emph{fixed} long root $\alpha_0 \in \Phi$ subject to the relations

\vskip1mm

\noindent $\bullet$ {\rm (a), (b), (c), (d), (e)} with $\alpha = \alpha_0$ if $\Phi$ is of type $\mathsf{C}_{\ell}$; and

\vskip1mm

\noindent $\bullet$ {\rm (a), (d), (e), (g)} with $\alpha = \alpha_0$ if $\Phi$ is nonsymplectic.
\end{thm}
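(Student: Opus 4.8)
The plan is to obtain Theorem~\ref{T-vdK} by specializing van der Kallen's computation of $K_2$ for rings with many units \cite[Thms.~3.4,~3.7]{vdK}; the actual work is (i) to place $S$ in the right class of rings, and (ii) to translate van der Kallen's presentation into the economical form stated here.

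First I would check that a commutative semilocal ring $S$ with infinite residue fields is a ring with many units in van der Kallen's sense. This is either among the standard examples recorded in \cite{vdK} or follows from an elementary Chinese Remainder argument: each of the finitely many ``elements in sufficiently general position'' demanded by his axiom can be produced one maximal ideal at a time --- infinitely many admissible choices are available modulo each $\m_i$ because $S/\m_i$ is infinite, and there are only finitely many $\m_i$ --- and then lifted to $S$, the lift being a unit precisely because it is a unit modulo every maximal ideal. This already re-establishes (consistently with \cite[Thm.~2.13]{St2}) that $K_2(\Phi,S)$ is a central subgroup of $\St(\Phi,S)$, and it licenses the use of \cite[Thms.~3.4,~3.7]{vdK}, which present $K_2(\Phi,S)$ by Steinberg symbols.

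It then remains to reduce the generating set to symbols $(x,y)_{\alpha_0}$ attached to a single fixed long root $\alpha_0$ and to extract the surviving relations. For the first point I would exploit the conjugation action of the Weyl-group representatives $\tilde w_\beta(u)$ on the elements $\tilde h_\alpha(\,\cdot\,)$, hence on the symbols $(\,\cdot\,,\,\cdot\,)_\alpha$: the Weyl group is transitive on long roots, and conjugation by $\tilde w_\beta(u)$ carries the $\alpha_0$-symbols to the $w_\beta(\alpha_0)$-symbols up to corrections lying in the central subgroup $K_2(\Phi,S)$, so every long-root symbol is expressible through $(\,\cdot\,,\,\cdot\,)_{\alpha_0}$; that short-root symbols are not needed is part of the structure theory underlying \cite{vdK}, \cite{M1}, \cite{St2}. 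For the second point one notes that relations (a)--(e) hold for Steinberg symbols over any commutative ring (as recorded above), while the bi-multiplicativity relation (g) is available exactly when $\Phi$ is not of type $\mathsf{C}_\ell$ --- and in that non-symplectic case (b) and (c) are consequences of the others, which is why only (a), (d), (e), (g) are listed; the completeness of these relation sets is the content of \cite[Thms.~3.4,~3.7]{vdK}. The main obstacle here is not conceptual but bookkeeping: one must match van der Kallen's normalization of the symbols and choice of long root to the conventions of (\ref{E-Stb1})--(\ref{E-Stb2}), confirm that his (a priori larger or differently presented) set of generators and relations collapses to the single-root form above, and keep careful track of which identities remain independent in each of the two cases.
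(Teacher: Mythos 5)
Your proposal matches the paper's intent exactly: the paper treats Theorem~\ref{T-vdK} as a direct specialization of van der Kallen's Theorems~3.4 and~3.7, offering no proof of its own beyond the citation. Your elaboration — that a semilocal ring with infinite residue fields is a ring with many units (via CRT plus an infinitude of choices modulo each maximal ideal), and that the reduction to symbols at a single long root with the stated relation sets is already the content of van der Kallen's theorems up to notational bookkeeping — fills in precisely the steps the paper leaves implicit, so this is the same route, only made explicit.
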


\section{On the splitting of some central extensions}\label{S-3}

In this section, $K$ will denote a fixed algebraically closed field of characteristic 0, and we will
consider (smooth) affine algebraic groups over $K$, which we will tacitly identify with their groups of $K$-rational points. If $A$ is a finite-dimensional $K$-algebra and $G$ is a smooth affine algebraic group over $A$, we will view $G(A)$ as an algebraic group over $K$ using Weil restriction, i.e. we will implicitly use the identification  $G(A) \simeq (R_{A/K} G)(K).$

Our goal in this section is to establish the following result.

\begin{prop}\label{P-SplitCE}
Let $K$ be an algebraically closed field of characteristic 0, $\Phi$ a reduced irreducible root system of rank $\geq 2$, and $G = G_{\Phi}$ the corresponding simply-connected Chevalley-Demazure group scheme over $\Z$. Let $A = K[\varepsilon]$, with $\varepsilon^d = 0$ for some $d \geq 1.$ Then any central extension of algebraic groups over $K$ of the form
$$
1 \to W \to E \to G(A) \to 1,
$$
where $W \simeq \mathbb{G}_a^r$ is a vector group, splits.
\end{prop}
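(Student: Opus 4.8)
The plan is to reformulate the statement as a vanishing assertion in Lie algebra cohomology, where the hypothesis that $K$ has characteristic $0$ makes it transparent. Write $\g_0$ for the simple Lie algebra of type $\Phi$ over $K$, and view $G(A)$ as the group of $K$-points of the connected affine algebraic $K$-group $\mathcal{G} := R_{A/K}\, G$, so that $\mathrm{Lie}(\mathcal{G}) \simeq \g_0 \otimes_K A$. Since $W \simeq \mathbb{G}_a^r$ and $\mathcal{G}$ are connected, so is $E$, and applying $\mathrm{Lie}$ to the given sequence produces a central extension of Lie algebras $0 \to \mathrm{Lie}(W) \to \mathrm{Lie}(E) \to \g_0 \otimes_K A \to 0$ in which $\mathrm{Lie}(W) \simeq K^r$ is central. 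The first step is to show this extension splits, i.e. that $H^2(\g_0 \otimes_K A,\, K) = 0$.

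For this I would appeal to Kassel's description of the universal central extension of a current algebra: for $\g_0$ a finite-dimensional simple Lie algebra over a field of characteristic $0$ and $A$ any commutative unital algebra, $H_2(\g_0 \otimes_K A;\, K) \simeq \Omega^1_{A/K}/dA$. For $A = K[\varepsilon]$ with $\varepsilon^d = 0$ this quotient vanishes: differentiating $\varepsilon^d = 0$ shows $\varepsilon^{d-1}\, d\varepsilon = 0$ in $\Omega^1_{A/K}$ (using that the integer $d$ is invertible in $K$), so $\Omega^1_{A/K}$ has $K$-basis $\varepsilon^j\, d\varepsilon$ for $0 \le j \le d-2$; and each of these generators lies in $dA$, because $\varepsilon^j\, d\varepsilon = (j+1)^{-1}\, d(\varepsilon^{j+1})$ with $j+1$ invertible in $K$. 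Hence $H_2(\g_0 \otimes_K A;\, K) = 0$, and since $H^2(\g_0 \otimes_K A,\, K)$ — and likewise with coefficients $K^r$ — is the $K$-linear dual of $H_2(\g_0 \otimes_K A;\, K)$, the Lie algebra extension splits. Fix a Lie subalgebra $\mathfrak{s} \subset \mathrm{Lie}(E)$ projecting isomorphically onto $\g_0 \otimes_K A$.

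The second step promotes this to a splitting of algebraic groups by passing to the derived subgroup. Let $E' := [E, E]$, a closed connected subgroup of $E$; in characteristic $0$ one has $\mathrm{Lie}(E') = [\mathrm{Lie}(E), \mathrm{Lie}(E)]$. Since $\mathrm{Lie}(E) = \mathfrak{s} \oplus \mathrm{Lie}(W)$ with $\mathrm{Lie}(W)$ central and $\mathfrak{s} \simeq \g_0 \otimes_K A$ perfect, we get $[\mathrm{Lie}(E), \mathrm{Lie}(E)] = [\mathfrak{s}, \mathfrak{s}] = \mathfrak{s}$. Hence the restriction $E' \to \mathcal{G}$ of the projection $E \to \mathcal{G}$ induces an isomorphism on Lie algebras; its kernel is finite and contained in $W = \ker(E \to \mathcal{G})$, hence trivial (a torsion-free group in characteristic $0$), and its image is all of the connected group $\mathcal{G}$ for dimension reasons. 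Therefore $E' \to \mathcal{G}$ is an isomorphism of algebraic groups, and its inverse $\mathcal{G} \stackrel{\sim}{\longrightarrow} E' \hookrightarrow E$ is the required splitting.

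I expect the one genuine point of the argument to be the first step, the vanishing $H^2(\g_0 \otimes_K A,\, K) = 0$: this is exactly where characteristic $0$ enters essentially, via the identity $\Omega^1_{A/K} = dA$ for truncated polynomial algebras — equivalently, via Kassel's theorem. Everything in the second step (that $E$ is connected, that $\mathrm{Lie}$ carries derived subgroups to derived subalgebras in characteristic $0$, and that a homomorphism of algebraic groups with trivial kernel and bijective differential is an isomorphism) is standard. As an alternative to invoking Kassel, one could split the extension first over a Levi subgroup of $\mathcal{G}$ isomorphic to $G$ over $K$ — this is possible because semisimple simply connected groups admit no nontrivial algebraic central extensions by vector groups — and then extend the section across the unipotent radical of $\mathcal{G}$; but arranging the equivariance needed for the second extension appears to cost more than the argument above.
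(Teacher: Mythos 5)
Your argument is correct, and it takes a genuinely different route from the paper's. The paper proceeds through algebraic $K$-theory and the Steinberg group: it identifies $G(A)$ with $E(\Phi,A)$ via Matsumoto and uses that $\St(\Phi,A)\to E(\Phi,A)$ is the universal central extension (Proposition \ref{P-UCEStb}), then shows that the putative Steinberg-symbol obstruction $c_\alpha \colon A^\times\times A^\times \to W$ is in fact a morphism of varieties (using a $T$-equivariant splitting of the restricted extension over a root subgroup, via a lemma of Conrad--Gabber--Prasad) and finally kills it directly by an iteration of the symbol identities (Lemma \ref{L-Gopal}), invoking van der Kallen's presentation of $K_2$. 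You instead work at the level of Lie algebras: you identify the obstruction to splitting the linearized extension of $\g_0\otimes_K A$ with $H^2(\g_0\otimes_K A,K)$, invoke Kassel's theorem $H_2(\g_0\otimes_K A;K)\simeq\Omega^1_{A/K}/dA$, compute that $\Omega^1_{A/K}=dA$ for a truncated polynomial ring in characteristic $0$, and then promote the Lie algebra splitting to an algebraic group splitting by observing that $[E,E]\to G(A)$ is an isomorphism (bijective differential, trivial kernel inside the torsion-free $W$). Your route is shorter and more conceptual; the characteristic-$0$ hypothesis enters in a very transparent way (invertibility of $d$ and of $j+1$), and it sidesteps all of the symbol bookkeeping and the need to verify regularity of the abstract section. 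What it buys at the price of invoking Kassel as a black box, the paper's argument instead earns by staying within Steinberg-group technology, which may be more robust if one cares about the abstract (non-algebraic) version of the statement or about how the section interacts with the root subgroups; the group-theoretic argument also more closely parallels how Proposition \ref{P-SplitCE} is actually applied later in the paper. The second step of your proof -- that $\mathrm{Lie}([E,E])=[\mathrm{Lie}(E),\mathrm{Lie}(E)]$ for connected $E$ in characteristic $0$, and that a morphism of smooth connected groups with bijective differential and trivial kernel is an isomorphism -- is indeed standard and correctly applied.
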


\vskip5mm

\noindent {\bf Remark 3.2.}

\vskip1mm

\noindent (a) Proposition \ref{P-SplitCE} is a particular case of the following general result stated (without proof) in \cite[Remark 5.1.5]{CGP}. Suppose $k$ is a field of arbitrary characteristic and $A$ a nonzero
commutative finite-dimensional $k$-algebra generated by a single element.
Then for any simply connected semisimple $A$-group $\tilde{G}$ (with connected fibers), any central extension of $R_{A/k} (\tilde{G})$ by a $k$-group scheme of finite type splits.

\vskip3mm

\noindent (b) We should point out that the assumption in part (a) that $A$ is generated by a single element
cannot be omitted. Indeed, consider the following example. Let $\mathcal{G}$ be an algebraic group defined over an algebraically closed field $K$ of characteristic 0 and denote by $\g$ the Lie algebra of $\mathcal{G}$. Furthermore, let $A = K[\varepsilon_1, \varepsilon_2]$ with $\varepsilon_1^2 = \varepsilon_2^2 = \varepsilon_1 \varepsilon_2 = 0.$ Notice that $\mathcal{G}(A) \simeq \mathcal{G} \ltimes (\g \oplus \g),$ with $\mathcal{G}$ acting on each of the two copies of $\mathfrak{g}$ via the adjoint representation. Now let $V = \g \oplus \g \oplus K$, and define the following operation on $V$:
$$
(a_1, b_1, c_1) \cdot (a_2, b_2, c_2) = (a_1 + a_2, b_1 + b_2, c_1 + c_2 + f(a_1, b_2) - f(a_2, b_1)),
$$
where $f(x,y)$ is the Killing form on $\g.$ One easily checks that this operation is associative and in fact $(V, \cdot)$ is a group with identity element $(0,0,0)$ and $(a_1, b_1, c_1)^{-1} = (-a_1, -b_1, -c_1).$ Using the fact that $(V, \cdot)$ is stable under the adjoint action of $\mathcal{G}$, we can form the
semidirect product $E = \mathcal{G} \ltimes V.$ Notice that the central extension
$$
0 \to K \to V \to (\g \oplus \g) \to 0
$$
does not split as $V$ is noncommutative. Consequently, the central extension
$$
1 \to \mathbb{G}_a \to E \to G(A) \to 1
$$
does not split either.

\vskip5mm

\addtocounter{thm}{1}

We will now give a proof of Proposition \ref{P-SplitCE} that is based in part on discussions and correspondence with Brian Conrad, Ofer Gabber, and Gopal Prasad. We begin with the following general statement, which we will then apply in the context of the Steinberg symbols discussed in \S\ref{S-2}.

\begin{lemma}\label{L-Gopal}
Let $K$ be an algebraically closed field and set $A = K[\varepsilon]$, with $\varepsilon^d = 0$ for some $d \geq 1.$ Suppose $c \colon A^{\times} \times A^{\times} \to K^r$ is a map that arises from a morphism $R_{A/K} (\G_m) \times R_{A/K} (\G_m) \to W$, where $W \simeq \mathbb{G}_a^r$ is a vector group. Assume that $c$ satisfies

\vskip1mm

\noindent {\rm (I)} $c(x, 1) = 0$ for all $x \in A^{\times}$;

\vskip1mm

\noindent {\rm (II)} $c(x,y) = c(x, (1-x)y)$ whenever $x, 1-x, y \in A^{\times}$;

\vskip1mm

\noindent {\rm (III)} $c(x,y) = c(y^{-1}, x)$ for all $x,y \in A^{\times}.$

\vskip1mm

\noindent Then $c$ is trivial (i.e. $c \equiv 0$).
\end{lemma}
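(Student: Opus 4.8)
The plan is to reduce immediately to the case $W = \mathbb{G}_a$ — composing $c$ with the coordinate projections $\mathbb{G}_a^r \to \mathbb{G}_a$, each component satisfies (I)--(III) — and then to exploit the group structure of $A^{\times}$. Since $A = K[\varepsilon]$ (with $\varepsilon^d = 0$) is local with residue field $K$, the reduction map $A \to K$ splits, yielding an isomorphism of algebraic $K$-groups $A^{\times} \simeq T \times U$, where $T \simeq \mathbb{G}_m$ is the canonical copy of $K^{\times}$ and $U = 1 + \mathfrak{m}_A$ is a commutative unipotent group, hence $U \simeq \mathbb{G}_a^{d-1}$; under this identification a morphism $A^{\times} \to \mathbb{G}_a$ is just an element of $\mathcal{O}(A^{\times}) = K[s^{\pm 1}] \otimes_K \mathcal{O}(U)$. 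From the hypotheses I would first extract two ``invariance operations''. Relation (II) says that for every $a \in A^{\times}$ with $1 - a \in A^{\times}$ the morphism $c(a, -)$ is invariant under multiplication by the cyclic subgroup $\langle 1-a \rangle$, hence by its Zariski closure $\overline{\langle 1-a \rangle}$; applying (III) to both sides of (II) (and renaming variables) gives the symmetric statement that $c(-, a)$ is likewise invariant under multiplication by $\overline{\langle 1-a \rangle}$. Setting $y = 1$ in (II) and using (I) shows moreover that $c(a, -)$ vanishes on the coset $\overline{\langle 1-a \rangle} \cdot 1 = \overline{\langle 1-a \rangle}$; and since (I) and (III) give $c(1, x) = c(x, 1) = 0$, the same holds for $c(-, a)$ on $\overline{\langle 1-a \rangle}$.

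The next step is to describe $\overline{\langle 1-a \rangle}$. Writing $1 - a = \lambda u$ with $\lambda \in T$ and $u \in U$: when $\lambda$ is not a root of unity — which excludes $a$ only from a countable union of cosets of $U$ — a standard argument based on the $K$-linear independence of the functions $n \mapsto n^j \lambda^{nk}$ on $\mathbb{Z}$ shows that $\overline{\langle 1-a \rangle} = T \times L_a$, where $L_a \subseteq U$ is the line through $\log u$ (the trivial line if $u = 1$). For $d = 1$ this is already all of $\mathbb{G}_m$, and the lemma follows. For $d = 2$ one has $L_a = U$ whenever $a \notin T$, so $\overline{\langle 1-a \rangle} = A^{\times}$; hence $c(a, -) \equiv 0$ for all $a$ in a Zariski-dense subset of $A^{\times}$, and $c \equiv 0$ by continuity.

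For $d \geq 3$ an iteration is needed. Set $Z := c^{-1}(0) \subseteq A^{\times} \times A^{\times}$, a Zariski-closed set stable under the two operations $(x,y) \mapsto (x, (1-x)y)$ and $(x,y) \mapsto ((1-y)x, y)$ and containing $\{a\} \times (T \times L_a)$ and $(T \times L_a) \times \{a\}$ for every $a$ in general position. Here the hypothesis that $A$ is generated by one element becomes essential: because $\mathfrak{m}_A = (\varepsilon)$ is principal, for $a$ in general position the vector $\omega_a := \log u \in \mathfrak{m}_A = \mathrm{Lie}(U)$ generates the ideal $\mathfrak{m}_A$, so that $\omega_a, \omega_a^2, \dots, \omega_a^{d-1}$ form a $K$-basis of $\mathrm{Lie}(U)$. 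Applying the two operations alternately to the zeros of $c$ already found, and exploiting the expansion $\exp(t\omega) - 1 = t\omega + \tfrac{t^2}{2}\omega^2 + \cdots$ — which feeds higher powers of $\omega$ into the directions produced at each step — one verifies that the $K$-span of the ``visible'' lines $L_{\bullet}$ grows strictly at every stage until it fills $\mathrm{Lie}(U)$; at that point $Z$ contains a Zariski-dense subset of $A^{\times} \times A^{\times}$, and being closed it must equal $A^{\times} \times A^{\times}$, i.e. $c \equiv 0$.

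The main obstacle is precisely this last step — the bookkeeping that shows the visible directions genuinely keep growing and are not trapped in a proper subspace of $\mathrm{Lie}(U)$. This is where the one-generator assumption is indispensable: in the spirit of Remark 3.2(b), for an algebra such as $A = K[\varepsilon_1, \varepsilon_2]/(\varepsilon_1^2, \varepsilon_2^2, \varepsilon_1\varepsilon_2)$, whose maximal ideal $\mathfrak{m}$ satisfies $\mathfrak{m}^2 = 0$, the ``$\exp - 1$'' operations create no new directions, and the conclusion of the lemma actually fails.
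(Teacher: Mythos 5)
Your reduction to $W=\mathbb{G}_a$, the decomposition $A^{\times}\simeq T\times U$ with $T\simeq\mathbb{G}_m$ and $U=1+\mathfrak{m}_A$ unipotent, the observation that for generic $x$ the closure $\overline{\langle 1-x\rangle}$ contains the scalar torus $T$, and the resulting $T$-invariance of $c$ in both slots (via (II) together with (III)) all match the opening of the paper's proof, and your $d\leq 2$ cases are correct.

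The problem is the general case $d\geq 3$, and you candidly flag it yourself: the ``bookkeeping that shows the visible directions genuinely keep growing'' is not carried out, and as stated it is a genuine gap rather than a routine verification. You propose iterating the two maps $(x,y)\mapsto(x,(1-x)y)$ and $(x,y)\mapsto((1-y)x,y)$ on $Z=c^{-1}(0)$ and arguing that the log-directions $L_{\bullet}$ produced must sweep out $\mathrm{Lie}(U)$. But at each stage these operations are anchored at specific elements, and you give no control on which pairs $(x,y)$ are actually reached, whether the span really increases (the ``$\exp-1$ feeds higher powers of $\omega$'' claim is suggestive, not proved), or why a Zariski-dense subset of $A^{\times}\times A^{\times}$ results at the end. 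This is precisely where the paper's argument does something different and decisively simpler. Once one has $T$-invariance of $c$ in both slots, one goes \emph{back} to (II) but now with scalar multiples: $c(x,y)=c(x,(1-ux)y)$ for all $u\in\mathbb{G}_m$ with $1-ux\in A^{\times}$. Combining with (I), $c(x,\cdot)$ vanishes on the closed subgroup $S(x)\subset A^{\times}$ generated by all $1-ux$. The key algebraic fact (the footnote to Lemma~\ref{L-Gopal}) is then that, writing $x=x_0+x_1\varepsilon+\cdots$ with $x_1\neq 0$ and $\delta=x_1\varepsilon+\cdots+x_{d-1}\varepsilon^{d-1}$, one has $1-ux=(1-ux_0)(1+v\delta)$, and products $(1+u_1\delta)\cdots(1+u_{d-1}\delta)=1+s_1\delta+\cdots+s_{d-1}\delta^{d-1}$ realize arbitrary elementary symmetric functions, so these elements already generate all of $1+\varepsilon K[\varepsilon]$; together with $K^{\times}$ this gives $S(x)=A^{\times}$ in one stroke, with no iteration over stages. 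So the missing ingredient in your proposal is not bookkeeping: it is the step of returning to (II) with the full one-parameter family $1-ux$ (now available thanks to scalar invariance) and invoking the elementary-symmetric-function identity, which replaces your open-ended induction by a single, closed computation.
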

\begin{proof}
We have $R_{A/K}(\mathbb{G}_m) = \mathbb{G}_m \times U$ where $U$ is a unipotent subgroup. It follows that there exists a Zariski-dense subset $X \subset R_{A/K}(\mathbb{G}_m)$ such that for $x \in X$, the element $1 - x$ lies in $A^{\times}$ and the Zariski closure of the cyclic subgroup
$\langle 1 - x \rangle$ contains $\mathbb{G}_m$, i.e. contains the {\it scalars}. Iterating (II), we obtain that $c(x , y) = c(x , (1-x)^{\ell}y)$ for any $\ell \in \Z$. So, for $x \in X$, the value $c(x , y)$ remains unchanged when an arbitrary $y \in A^{\times}$ is multiplied by a scalar. Since $X$ is Zariski-dense in $A^{\times}$, this holds
in fact for all $x , y \in A^{\times}$. It then follows from (III) that $c(x , y)$ remains unchanged when $x$ is multiplied by a scalar. (Notice that this immediately proves our claim if $d = 1$, so we will assume in the remainder of the argument that $d \geq 2$). Turning to (II) again, we see that
for any $x , y \in A^{\times}$ and any scalar $u \in \mathbb{G}_m$ we have
\begin{equation}\label{E-GopalLemma}
c(x , y) = c(x , (1 - ux)y)
\end{equation}
whenever $1 - ux \in A^{\times}$. For any $x \in A^{\times}$, we let $S(x)$ denote the closed subgroup of $A^{\times}$ generated by all elements $1 - ux \in A^{\times}$ with $u \in \mathbb{G}_m$. Then we conclude from (\ref{E-GopalLemma}) and (I) that $c(x , y) = 0$ for all $y \in S(x)$. Let $V = \{x= x_0 + x_1 \varepsilon+ \cdots + x_{d-1}\varepsilon^{d-1} \in A^{\times} \, \vert \, x_1 \neq 0\}.$ One easily shows that $S(x) = A^{\times}$ for any $x \in V.$\footnotemark
\footnotetext{This can be seen as follows. First, we observe that if $\delta = a_1 \varepsilon + \cdots + a_{d-1} \varepsilon^{d-1} \in A$ with $a_1 \neq 0$, then the elements $1 + u \delta$, where $u$ runs through almost all elements of $K$, generate $1 + \varepsilon K[\varepsilon]$ as an algebraic group. Indeed, we have $1 + \delta K[\delta] = 1 + \varepsilon K[\varepsilon].$ On the other hand, for $u_1, \dots, u_{d-1} \in K$,
$$
(1 + u_1\delta) \cdots (1 + u_{d-1}\delta) = 1 + s_1\delta + \cdots +s_{d-1} \delta^{d-1},
$$
where $s_1, \dots, s_{d-1}$ are the elementary symmetric functions in $u_1, \dots, u_{d-1}$, and the required fact follows. Now, if $x = x_0 + x_1 \varepsilon + \cdots + x_{d-1} \varepsilon^{d-1} \in A^{\times}$ with $x_1 \neq 0$, then clearly $K^{\times} \subset S(x).$ So, to show that $S(x) = A^{\times}$, we only need to verify that $1 + \varepsilon K[\varepsilon] \subset S(x).$ Set $\delta = x_1 \varepsilon + \cdots + x_{d-1} \varepsilon.$  Then
$$
1 - ux = (1 - ux_0) - u\delta = (1 - ux_0)(1 + v\delta), \ \ \ \ \text{where} \ \ v = -\frac{u}{1 - ux_0}.
$$
Thus, the elements $1 + v\delta$, for almost all $v \in K$, are contained in $S(x)$, and the first part of the argument gives the required inclusion.}
Consequently, $c(x , y) = 0$ for any $x \in V$ and $y \in A^{\times}$, and then our claim follows since $V$ is Zariski-open, hence dense in $A^{\times}$.


\end{proof}

\vskip2mm

\noindent {\it Proof of Proposition \ref{P-SplitCE}.}  Suppose
\begin{equation}\label{E-CentExt}
1 \to W \to E \stackrel{\pi}{\longrightarrow} G(A) \to 1,
\end{equation}
where $W \simeq \mathbb{G}_a^r$,
is a central extension. Since $G(A) = E (\Phi, A)$ by \cite[Corollary 2]{M}, it follows from Proposition \ref{P-UCEStb} that there exists a unique group homomorphism $\theta \colon \text{St}(\Phi, A) \to E$ making the following diagram commute:
$$
\xymatrix{1 \ar[r] & K_2 (\Phi, A) \ar[r] \ar[d]_{\theta} & \text{St} (\Phi, A) \ar[r]^{\pi_A} \ar[d]_{\theta} & G(A) \ar[d]_{\text{id}} \ar[r] & 1 \\ 1 \ar[r] & K^r \ar[r] & E \ar[r]^{\pi} & G(A) \ar[r] & 1}
$$
To complete the proof, it suffices to show that for any root $\alpha \in \Phi$, the map
$$
\psi_{\alpha} \colon A \to E, \ \ \ r \mapsto \theta(\tilde{x}_{\alpha} (r))
$$
arises from a morphism $R_{A/K}(\mathbb{G}_a) \to E.$ Indeed, this will imply that
$$
c_{\alpha} \colon A^{\times} \times A^{\times} \to K^r, \ \ \ (u,v) \mapsto \theta ((u,v)_{\alpha}),
$$
where $(u,v)_{\alpha}$ is the Steinberg symbol associated to $\alpha$,
arises from a morphism $$R_{A/K} (\G_m) \times R_{A/K} (\G_m) \to W,$$ and is therefore trivial by Theorem \ref{T-vdK} and Lemma \ref{L-Gopal}. Consequently, $\theta$ vanishes on $K_2(\Phi, A)$, yielding an abstract section
of (\ref{E-CentExt}). Then, arguing as
in \cite[\S 6]{IR}, one shows that this section is in fact a morphism.


\begin{lemma}
For any $\alpha \in \Phi,$ the map
$$
\psi_{\alpha} \colon A \to E, \ \ \ r \mapsto \theta(\tilde{x}_{\alpha} (r))
$$
arises from a morphism $R_{A/K}(\mathbb{G}_a) \to E.$
\end{lemma}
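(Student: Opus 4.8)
The plan is to analyse the pull-back of the central extension over a single root subgroup and thereby pin down $\psi_\alpha$ up to a morphism. Set $U_\alpha = e_\alpha(R_{A/K}(\mathbb{G}_a)) \leq G(A)$ and $\widetilde U_\alpha = \pi^{-1}(U_\alpha) \leq E$. Since $\pi \circ \theta = \pi_A$ and $\pi_A(\tilde x_\alpha(r)) = e_\alpha(r)$, the map $\psi_\alpha$ takes values in $\widetilde U_\alpha$, and by (\ref{E:StG103}) it is a group homomorphism from $(A,+) \simeq R_{A/K}(\mathbb{G}_a)(K)$ to $\widetilde U_\alpha$ splitting $\pi|_{\widetilde U_\alpha}$. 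In particular $\psi_\alpha(A)$ is a commutative subgroup of $\widetilde U_\alpha(K)$.

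The key step is to prove that each $\widetilde U_\alpha$ is \emph{commutative}. Let $\mathcal U_\alpha = \overline{\psi_\alpha(A)}$ be the Zariski closure; it is a commutative algebraic subgroup of $\widetilde U_\alpha$, and since $\pi(\mathcal U_\alpha)$ is a closed subgroup of $U_\alpha$ containing the dense set $\pi(\psi_\alpha(A)) = U_\alpha(K)$, the restriction $\pi|_{\mathcal U_\alpha}\colon \mathcal U_\alpha \to U_\alpha$ is surjective. As the extension is central and $U_\alpha$ is commutative, the commutator in $E$ of two elements of $\widetilde U_\alpha$ lies in $W$ and depends only on their images in $U_\alpha$; lifting each pair of images to commuting elements of $\mathcal U_\alpha$ shows this commutator pairing $U_\alpha \times U_\alpha \to W$ is trivial, i.e. $\widetilde U_\alpha$ is commutative. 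Being commutative and unipotent over a field of characteristic $0$, $\widetilde U_\alpha$ is a vector group, so the surjection $\pi|_{\widetilde U_\alpha}$ admits a morphic section $\sigma_\alpha\colon U_\alpha \to \widetilde U_\alpha \subset E$.

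Now $\sigma_\alpha$ need not equal $\psi_\alpha$, but $\psi_\alpha(r)$ and $\sigma_\alpha(e_\alpha(r))$ have the same image under $\pi$ and hence differ by an element of the central subgroup $W$; consequently, for roots $\beta, \gamma$ with $\gamma \neq -\beta$,
$$
[\psi_\beta(s), \psi_\gamma(t)] = [\sigma_\beta(e_\beta(s)), \sigma_\gamma(e_\gamma(t))],
$$
and the right-hand side is the restriction of a morphism $R_{A/K}(\mathbb{G}_a) \times R_{A/K}(\mathbb{G}_a) \to E$. The remainder is a downward induction on the height of positive roots. For the highest root $\delta_0 = \beta + \gamma$ ($\beta,\gamma$ positive, possible since $\mathrm{rk}\,\Phi \geq 2$), no root $i\beta + j\gamma$ with $i + j \geq 3$ occurs in (\ref{E:StG104}) (its height would exceed the maximal one), so applying $\theta$ gives $\psi_{\delta_0}(N^{1,1}_{\beta,\gamma} st) = [\sigma_\beta(e_\beta(s)), \sigma_\gamma(e_\gamma(t))]$; setting $s = 1$ and rescaling by $(N^{1,1}_{\beta,\gamma})^{-1} \in K^\times$ shows $\psi_{\delta_0}$ is a morphism. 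For a non-simple positive root $\delta = \beta + \gamma$ with $\beta,\gamma$ positive, every further root $i\beta + j\gamma$ in (\ref{E:StG104}) has strictly larger height, so $\psi_{i\beta+j\gamma}(N^{i,j}_{\beta,\gamma} s^i t^j)$ is a morphism in $(s,t)$ by the inductive hypothesis, and the same argument applies. For a simple root $\alpha_i$, choosing an adjacent node $\alpha_j$ and writing $\alpha_i = (\alpha_i + \alpha_j) + (-\alpha_j)$, the extra roots in (\ref{E:StG104}) lie among $2\alpha_i + \alpha_j$, $3\alpha_i + \alpha_j$, $3\alpha_i + 2\alpha_j$ — positive of height $\geq 3$, hence already handled — while $\sigma_{\alpha_i + \alpha_j}$ and $\sigma_{-\alpha_j}$ are available, so the reasoning goes through (and $N^{1,1} \in \{\pm1,\pm2,\pm3\}$ is invertible in $K$ since $\mathrm{char}\,K = 0$). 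Finally, for a negative root $\alpha$ one uses $\theta(\tilde w_\alpha(1))\,\psi_{-\alpha}(t)\,\theta(\tilde w_\alpha(1))^{-1} = \psi_\alpha(\pm t)$ together with the fact that conjugation by the fixed element $\theta(\tilde w_\alpha(1)) \in E$ is a morphism, reducing to the positive case.

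The main obstacle is the commutativity of $\widetilde U_\alpha$ in the second step: this is the one place where one genuinely uses the structure of the problem (that the abstract root subgroups are commutative and that their images are Zariski-dense in $U_\alpha$), and once it is in place the rest is a routine, if slightly fussy, induction with the commutator relations (\ref{E:StG104}) — the point needing care being to arrange each application of those relations so that it involves only root subgroup maps already known to be morphisms, which is why the simple roots must be treated separately via a decomposition with one negative summand.
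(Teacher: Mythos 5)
Your argument is correct, but it takes a genuinely different route from the paper's. The paper exploits the $T$-action on $E$ (induced from conjugation in $G(A)$, which lifts because $W$ is central) to make the restricted extension $0 \to W \to \pi^{-1}(e_\alpha(A)) \to e_\alpha(A) \to 1$ a \emph{$T$-equivariant} central extension; it then invokes \cite[Lemma~5.1.6]{CGP} to produce a $T$-equivariant morphic splitting $\varphi_\alpha$, and finally pins down $\psi_\alpha = \varphi_\alpha \circ e_\alpha$ in a single stroke via the Steinberg identity $\tilde x_\alpha(r) = [\tilde h_\alpha(s), \tilde x_\alpha(r/(s^2-1))]$, using $T$-equivariance and the fact that commutators of lifts in a central extension are well defined. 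You instead avoid \cite{CGP} entirely: the Zariski-closure argument showing $\pi^{-1}(e_\alpha(A))$ is commutative (and hence a vector group, giving a morphic but a priori non-equivariant section $\sigma_\alpha$) is a nice, self-contained replacement for the equivariant splitting, and since you only control $\psi_\alpha$ modulo $W$ this way, you then recover regularity of $\psi_\alpha$ itself by a downward induction on root height through the Chevalley commutator relations (\ref{E:StG104}), treating simple roots by a decomposition with one negative summand and negative roots by conjugation with a lift of a Weyl element. What the paper's approach buys is brevity — one identity does the work root by root, with no induction and no case split; what yours buys is elementary self-containment, at the cost of the ordering bookkeeping in the commutator relations and the extra cases. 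Two small points worth making explicit if you write this up: you should note that $\pi^{-1}(e_\alpha(A))$ is \emph{connected} (it is, since $W$ and $e_\alpha(A)$ both are) before invoking the vector-group structure; and in the inductive step you should say that you choose the factor corresponding to $\beta+\gamma$ to appear first in the fixed ordering of (\ref{E:StG104}), so that it can be isolated as a product of morphisms already in hand.
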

\begin{proof}
Let $T$ be the standard split maximal torus of $G(K)$, i.e. the group generated by the elements $h_{\alpha}(s)$ defined in (\ref{E-Stb2}) for all $\alpha \in \Phi$ and $s \in K^{\times}$ (see \cite[Theorem 6, pg 58]{Stb1}). For any root $\alpha \in \Phi$, denote by $e_{\alpha} (A)$ the corresponding root subgroup of $G(A)$, and consider the central extension
\begin{equation}\label{E:Ext2}
0 \to C \to E_{\alpha} = \pi^{-1}(e_{\alpha} (A)) \stackrel{\pi}{\longrightarrow} e_{\alpha} (A) \to 1,
\end{equation}
where $C \subset W$,
obtained from (\ref{E-CentExt}) by restriction. The centrality of $W$ in $E$ implies that the natural conjugation action of $G(A)$ on itself lifts to a $G(A) = E/W$-action on $E$. In particular, since $e_{\alpha}(A)$ is stable under the action of $T$ (see \cite[pg 30]{Stb1}), we conclude that (\ref{E:Ext2}) is a $T$-equivariant central extension of $e_{\alpha} (A).$ Consequently, by \cite[Lemma 5.1.6]{CGP}, there exists a $T$-equivariant morphism $\varphi_{\alpha} \colon e_{\alpha}(A) \to E_{\alpha}$ that gives a splitting of (\ref{E:Ext2}).


Now, to prove that $\psi_{\alpha}$ is regular, it is enough to show that
\begin{equation}\label{E:Ext3}
\psi_{\alpha}(r) = \varphi_{\alpha}(e_{\alpha} (r)) \ \ \ \text{for all} \ \ r \in A.
\end{equation}
For this, pick $s \in K^{\times},$ $s \neq \pm 1$, and let $t_{\alpha} \in E$ be any lift of $h_{\alpha} (s).$ Then, using the commutator relations in $G(A)$, together with the fact that $\varphi_{\alpha}$ is $T$-equivariant, we obtain
\begin{equation}\label{E:Ext4}
\varphi_{\alpha}(e_{\alpha}(r)) = \varphi_{\alpha}([h_{\alpha}(s) , e_{\alpha}(r/(s^2 - 1))]) = [t_{\alpha} , \varphi_{\alpha}(e(r/(s^2 - 1)))].
\end{equation}
On the other hand, according to the relations in the Steinberg group (cf. \cite[3.8]{St1}), we have
$$
\tilde{x}_{\alpha}(r) = [\tilde{h}_{\alpha}(s) , \tilde{x}_{\alpha}(r/(s^2 - 1))],
$$
so
\begin{equation}\label{E:Ext5}
\psi_{\alpha}(r) = \theta(\tilde{x}_{\alpha}(r)) = [\theta(\tilde{h}_{\alpha}(s)) , \theta(\tilde{x}_{\alpha}(r/(s^2 - 1)))].
\end{equation}
Since the two pairs $t_{\alpha}$, $\varphi_{\alpha}(e_{\alpha}(r/(s^2 - 1)))$ and $\theta(\tilde{h}_{\alpha}(s))$, $\theta(\tilde{x}_{\alpha}(r/(s^2 - 1)))$ consist of lifts
of the elements $h_{\alpha}(s)$, $e(r/(s^2 - 1))$ in the central extension $E$, they have the same commutator. Thus, comparing (\ref{E:Ext4}) and (\ref{E:Ext5}), we obtain (\ref{E:Ext3}), which completes the proof.

\end{proof}

\section{Rigidity and condition (Z)}\label{S-4}

To proceed with the proofs of Theorems \ref{T-MainThm} and \ref{T-MainThm1}, we need to recall the relevant technical details of the rigidity result from \cite{IR} that involves condition (Z). Consider a representation
$$
\rho \colon E(\Phi, R) \to GL_n (K),
$$
where $\Phi$ is a reduced irreducible root system of rank $\geq 2,$ $R$ a commutative ring such that $(\Phi, R)$ is a nice pair, and $K$ an algebraically closed field of characteristic 0. Then, firstly, one associates to $\rho$ a connected commutative algebraic ring $B$, together with a ring homomorphism $f \colon R \to B$ with Zariski-dense image. Since ${\rm char}~K = 0$, the algebraic ring $B$ is identified with a finite-dimensional commutative $K$-algebra (see \cite[Proposition 2.14 and Theorem 3.1]{IR}), and the Zariski density of $f(R)$ in $B$ then reduces to the fact that $f(R)$ spans $B$ over $K$. Secondly, let $H = \overline{\rho (E(\Phi, R))}$ be the Zariski closure of the image, $H^{\circ}$ the connected component of the identity, and $U$ and $Z(H^{\circ})$ the unipotent radical and center of $H^{\circ}$, respectively. One shows that if $H^{\circ}$ satisfies the following condition
\vskip2mm

\noindent (Z) \hskip6.2cm $Z(H^{\circ}) \cap U = \{ e \}$,

\vskip2mm
\noindent then there exists a morphism of algebraic groups $\sigma \colon G(B) \to H$ such that on a suitable finite-index subgroup $\Delta \subset E(\Phi, R)$, we have
$$
\rho \vert_{\Delta} = (\sigma \circ F) \vert_{\Delta},
$$
where $F \colon E(\Phi, R) \to E(\Phi, B)$ is the group homomorphism induced by $f$ (see \cite[Theorem 6.7]{IR}). The purpose of this section is to highlight the properties of $B$ and $\sigma$ that will be needed in the proofs of Theorems \ref{T-MainThm} and \ref{T-MainThm1}.

First, we have the following general statement.

\begin{lemma}\label{L-Commutator}
$H^{\circ} = [H^{\circ}, H^{\circ}].$
\end{lemma}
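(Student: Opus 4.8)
The plan is to show that $H^\circ$ is perfect by exhibiting a sufficiently large supply of elements that are visibly commutators. The natural source of such elements is the image under $\rho$ of the root subgroups of $E(\Phi,R)$, together with the commutator formula \eqref{E:StG104}. Since $\Phi$ has rank $\geq 2$, every root $\alpha \in \Phi$ can be written as $\alpha = \beta + \gamma$ for suitable $\beta, \gamma \in \Phi$ with $\beta \neq -\gamma$ (more precisely, one uses pairs of roots whose sum is a root), and the Chevalley commutator relations then express $e_\alpha(s)$, or at least $e_\alpha$ of a full rank-one subgroup, as a product of commutators of elements $e_\beta(t)$, $e_\gamma(u)$. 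Applying $\rho$ and passing to Zariski closures, this shows that the closure $\overline{\rho(U_\alpha)}$ of the image of each root subgroup lies in $[H^\circ, H^\circ]$, where $U_\alpha = e_\alpha(R)$.

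First I would recall that $E(\Phi,R)$ is generated by the root subgroups $U_\alpha$, $\alpha \in \Phi$, so that $H = \overline{\rho(E(\Phi,R))}$ is generated as an algebraic group by the subgroups $\overline{\rho(U_\alpha)}$. Since $H^\circ$ has finite index in $H$, it is generated by the connected groups $\overline{\rho(U_\alpha)}^\circ$ together with finitely many cosets; in fact $H^\circ$ is generated by the $\overline{\rho(U_\alpha)}^\circ$ for $\alpha \in \Phi$ (the subgroup they generate is connected, normal-up-to-finite-index, and of finite index in $H$). Next, using the rank $\geq 2$ hypothesis, for each $\alpha$ choose $\beta,\gamma \in \Phi$ with $\beta + \gamma = \alpha$ and $\beta \neq -\gamma$; the relation \eqref{E:StG104} gives
$$
[e_\beta(s), e_\gamma(t)] = e_\alpha(N^{1,1}_{\beta,\gamma}\, st) \cdot \prod_{(i,j) \neq (1,1)} e_{i\beta + j\gamma}(N^{i,j}_{\beta,\gamma}\, s^i t^j).
$$
By the structure of root systems of rank $\geq 2$ one can select $\beta, \gamma$ so that the constant $N^{1,1}_{\beta,\gamma}$ is nonzero, and by varying $s,t$ (together with conjugation by torus elements $h_\delta(u)$, whose images again lie in $H^\circ$ once one checks $h_\delta(u) \in [E(\Phi,R),E(\Phi,R)] = E(\Phi,R)$) one sees that the commutator subgroup $[H^\circ, H^\circ]$ contains the image of the root subgroup $U_\alpha$, hence $\overline{\rho(U_\alpha)}^\circ$. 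Since the $\overline{\rho(U_\alpha)}^\circ$ generate $H^\circ$, we get $H^\circ \subseteq [H^\circ, H^\circ]$, and the reverse inclusion is trivial.

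The main obstacle I anticipate is the bookkeeping needed to guarantee that a root subgroup $U_\alpha$ itself, and not merely some product of it with the higher-degree terms $e_{i\beta+j\gamma}$ in \eqref{E:StG104}, ends up inside $[H^\circ,H^\circ]$; this is the standard subtlety in proving perfectness of Chevalley-type groups from the commutator relations, and it is handled exactly as in Steinberg's argument for the perfectness of $E(\Phi,S)$ (cf. \cite[Chapter 3]{Stb1} and Proposition \ref{P-Stein1}) — one inverts the lower-triangular system of relations, using that the "diagonal" structure constants are invertible in the relevant situation, or one simply invokes the fact that $E(\Phi,R)$ is already perfect so that $\rho(E(\Phi,R)) \subseteq [\rho(E(\Phi,R)),\rho(E(\Phi,R))] \subseteq [H,H]$, and then notes that $[H,H] \subseteq H^\circ$ since $H/H^\circ$ is finite (abelian image of a commutator lands in... ) — more precisely, $[H,H]$ is connected when $H^\circ$ is, being the image of the connected group $H^\circ \times H^\circ \times \cdots$ under iterated commutator maps, so $[H,H] \subseteq H^\circ$, giving $\overline{\rho(E(\Phi,R))} \subseteq H^\circ$ up to closure, i.e. $H = H^\circ$ and $H = [H,H]$. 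This last route is cleanest: $E(\Phi,R)$ perfect $\Rightarrow$ $\rho(E(\Phi,R)) = [\rho(E(\Phi,R)),\rho(E(\Phi,R))]$; taking closures, $H = \overline{[\rho(E(\Phi,R)),\rho(E(\Phi,R))]} \subseteq [H^\circ, H^\circ] \subseteq H^\circ$, whence $H = H^\circ = [H^\circ,H^\circ]$, using that $[H^\circ,H^\circ]$ is a closed connected subgroup.
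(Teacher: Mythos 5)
Your proposal rests, in both its variants, on the inclusion
$\overline{[\rho(E(\Phi,R)),\rho(E(\Phi,R))]}\subseteq [H^{\circ},H^{\circ}]$, and this is exactly the gap. Perfectness of $E(\Phi,R)$ (or of its image) gives $\rho(E(\Phi,R))=[\rho(E(\Phi,R)),\rho(E(\Phi,R))]\subseteq [H,H]$, and since $[H,H]$ is Zariski closed you may take closures; but all you obtain is $H=[H,H]$, not $H^{\circ}=[H^{\circ},H^{\circ}]$. Your justification that ``$[H,H]$ is connected, being the image of $H^{\circ}\times H^{\circ}\times\cdots$ under iterated commutator maps'' is incorrect: $[H,H]$ is generated by commutators $[h_1,h_2]$ with $h_1,h_2$ ranging over \emph{all} of $H$, not just $H^{\circ}$, so when $H$ is disconnected there is no reason for $[H,H]$ to lie in $H^{\circ}$. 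And $H$ genuinely can be disconnected here: $H/H^{\circ}$ is a finite perfect quotient of $E(\Phi,R)$, and groups like $SL_n(\Z[X])$ have many such quotients (e.g., $SL_n(\mathbb{F}_p)$), so a representation of the form $\rho_1\oplus\rho_2$ with $\rho_1$ having infinite image and $\rho_2$ factoring through such a finite quotient gives a disconnected $H$ with $H\neq H^{\circ}$. The first variant of your argument has the same problem one step earlier: even if the Chevalley commutator relations place $e_{\alpha}(r)$ in $[E(\Phi,R),E(\Phi,R)]$, the resulting commutator $[\rho(e_{\beta}(s)),\rho(e_{\gamma}(t))]$ lands in $[H,H]$, not in $[H^{\circ},H^{\circ}]$, unless you first know $\rho(e_{\beta}(s)),\rho(e_{\gamma}(t))\in H^{\circ}$ --- which is not given.

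The paper's proof sidesteps this by invoking a stronger ingredient from \cite[Proposition~5.3]{IR}: the algebraic-ring machinery attached to $\rho$ produces a surjective group homomorphism $\mathrm{St}(\Phi,B)\twoheadrightarrow H^{\circ}$, i.e., a surjection onto the \emph{identity component} itself, not merely onto $H$; combined with the perfectness of $\mathrm{St}(\Phi,B)$ (Proposition~\ref{P-Stein1}), this immediately gives $H^{\circ}=[H^{\circ},H^{\circ}]$. The fact that the Steinberg group over the connected algebraic ring $B$ hits precisely $H^{\circ}$ is the substantive content that your argument would need to reproduce; perfectness of the discrete group $E(\Phi,R)$ alone only controls $H$, not its identity component.
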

\begin{proof}
This is proved in \cite[Proposition 5.3]{IR} and follows from the existence of a surjective group homomorphism ${\rm St}(\Phi, B) \twoheadrightarrow H^{\circ}$, together with the fact that ${\rm St}(\Phi, B)$ is perfect (see Proposition \ref{P-Stein1}).
\end{proof}




Next, let $B$ be a finite-dimensional commutative $K$-algebra and denote by $J = J(B)$ the Jacobson radical of $B$. Then, since $K$ is perfect, it follows from the Wedderburn-Malcev theorem that
there exists a $K$-subalgebra $\bar{B} \subset B$ such that $\bar{B} \simeq B/J \simeq K \times \cdots \times K$ (see \cite[Corollary 11.6]{P}). With these notations, we have the following result that describes the structure of $B$ in the situations considered in Theorems \ref{T-MainThm} and \ref{T-MainThm1}.

\begin{lemma}\label{L-AlgRing1}
Suppose $k$ is a commutative ring, $R$ a commutative $k$-algebra, and $B$ a commutative finite-dimensional algebra over an algebraically closed field $K$. Assume that $\dim_K \mathrm{Der}^g_k (R,K) \leq 1$ for all ring homomorphisms $g \colon R \to K$, where $\mathrm{Der}^g_k (R,K) \subset \mathrm{Der}^g (R,K)$ is the $K$-subspace of derivations that vanish on $k$. If
$f \colon R \to B$ is a ring homomorphism such that $f(R)$ spans $B$ over $K$ and $f(k) \subset \overline{B}$, then
$$
B = B_1 \oplus \cdots \oplus B_r,
$$
where $r = \dim_K \overline{B}$ and $B_i = K[\varepsilon_i]$, with $\varepsilon_i^{d_i} = 0$ for some $d_i \geq 1$.
\end{lemma}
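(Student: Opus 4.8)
The plan is to reduce the statement to a bound on the cotangent spaces of the local factors of $B$, and to produce the derivations of $R$ required by the hypothesis from the linear structure of those cotangent spaces. First I would recall the relevant structure theory: since $B$ is a finite-dimensional, hence Artinian, commutative $K$-algebra with $K$ algebraically closed, it decomposes canonically as $B = B_1 \oplus \cdots \oplus B_s$ with each $B_i$ local Artinian, maximal ideal $\m_i$, residue field $B_i/\m_i \cong K$, and $B_i = K 1_i \oplus \m_i$ as a $K$-vector space, where $1_i$ is the identity of $B_i$; the number of factors is $s = \dim_K(B/J) = \dim_K \overline{B} = r$, where $J = J(B) = \m_1 \oplus \cdots \oplus \m_s$. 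Since each $B_i$ is local, its only idempotents are $0$ and $1_i$, so $\{1_1, \dots, 1_r\}$ is the unique family of $r$ pairwise orthogonal nonzero idempotents of $B$ summing to $1$; consequently any $K$-subalgebra of $B$ isomorphic to $K^r$ equals $K 1_1 \oplus \cdots \oplus K 1_r$, whence $\overline{B} = K 1_1 \oplus \cdots \oplus K 1_r$ and the hypothesis $f(k) \subset \overline{B}$ gives $p_i(f(k)) \subset K 1_i$ for each projection $p_i \colon B \to B_i$. Granting all of this, it suffices to show $\dim_K(\m_i/\m_i^2) \leq 1$ for every $i$: Nakayama's lemma then provides a (possibly zero, necessarily nilpotent) generator $\varepsilon_i$ of $\m_i$, and $B_i = K \oplus \m_i = K[\varepsilon_i]$ with $\varepsilon_i^{d_i} = 0$ for $d_i = \dim_K B_i$.

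To bound $\dim_K(\m_i/\m_i^2)$, fix $i$ and let $g_i \colon R \to K$ be the ring homomorphism obtained as the composite $R \xrightarrow{f} B \to B_i \to B_i/\m_i = K$. For $r \in R$ write $p_i(f(r)) = g_i(r) 1_i + n_i(r)$ with $n_i(r) \in \m_i$, and let $\overline{n}_i(r)$ denote its class in $\m_i/\m_i^2$. Using the multiplicativity of $f$ and $p_i$ and the fact that the resulting cross-term lies in $\m_i^2$, one checks that $\overline{n}_i \colon R \to \m_i/\m_i^2$ satisfies the additivity and Leibniz identities of a derivation with respect to $g_i$, now with values in the $K$-vector space $\m_i/\m_i^2$; and $\overline{n}_i$ vanishes on $k$ because $p_i(f(k)) \subset K 1_i$ forces $n_i(k) = 0$. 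Moreover $\overline{n}_i$ is surjective: $f(R)$ spans $B$, hence $p_i(f(R))$ spans $B_i$, so a given $m \in \m_i$ can be written $m = \sum_j c_j\, p_i(f(r_j))$ with $c_j \in K$; projecting modulo $\m_i$ forces $\sum_j c_j g_i(r_j) = 0$, whence $m = \sum_j c_j\, n_i(r_j)$, and reduction mod $\m_i^2$ gives the claim.

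Finally, for any functional $\lambda \in \Hom_K(\m_i/\m_i^2, K)$ the composite $\lambda \circ \overline{n}_i$ belongs to $\mathrm{Der}_k^{g_i}(R, K)$, and the assignment $\lambda \mapsto \lambda \circ \overline{n}_i$ is injective because $\overline{n}_i$ has full image. Hence $\dim_K(\m_i/\m_i^2) = \dim_K \Hom_K(\m_i/\m_i^2, K) \leq \dim_K \mathrm{Der}_k^{g_i}(R, K) \leq 1$ by hypothesis, which is exactly what was needed. The only step that calls for genuine (if brief) care is the verification that $\overline{n}_i$ is a derivation, together with the two bookkeeping points — that it kills $k$ and that its image is all of $\m_i/\m_i^2$ — since these are what link the abstract hypothesis on $\mathrm{Der}^g_k$ to each local factor $B_i$; the remaining ingredients are standard structure theory of commutative Artinian $K$-algebras, so I do not foresee a substantial obstacle.
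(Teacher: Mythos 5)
Your proposal is correct and follows essentially the same route as the paper: reduce to the local factors $B_i$, pass to $B_i/\m_i^2$, extract from $p_i \circ f$ a ring homomorphism $g_i$ plus a derivation-like remainder that kills $k$, and use the hypothesis $\dim_K \mathrm{Der}^{g_i}_k(R,K) \leq 1$ to force $\dim_K(\m_i/\m_i^2) \leq 1$, then invoke Nakayama. The only cosmetic difference is that you package the remainder as a single derivation $\overline{n}_i$ with values in $\m_i/\m_i^2$ and dualize, whereas the paper chooses a $K$-basis of $\m_i/\m_i^2$ and reads off coordinate derivations $\delta_1,\dots,\delta_s \in \mathrm{Der}^{g_i}_k(R,K)$ directly; these are the same calculation. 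One small phrasing caveat: what your ``$f(R)$ spans $B$'' argument actually shows is that the $K$-linear span of $\overline{n}_i(R)$ is all of $\m_i/\m_i^2$ (since $\overline{n}_i$ is additive but not $K$-linear, it need not literally be surjective), but that is exactly what is needed for the injectivity of $\lambda \mapsto \lambda \circ \overline{n}_i$, so the argument stands.
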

\begin{proof}
We begin with a reduction. Let $e_i \in \overline{B}$ be the $i$th standard basis vector. Since $e_1, \dots, e_r$ are idempotent, and we have $e_1 + \cdots + e_r = 1$ and $e_i e_j = 0$ for $i \neq j,$ it follows that we can write
$B = \oplus_{i=1}^{r} B_i,$ where $B_i = e_i B.$ Clearly, $B_i = \overline{B}_i \oplus J_i$ with $\overline{B}_i = e_i \overline{B} \simeq K$ and $J_i = e_i J$. In particular, $B_i$ is a local $K$-algebra with maximal ideal $J_i$, and we need to show that each $B_i$ has the required form. Thus, we may assume for the rest of the argument that $B = K \oplus \m$ is a finite-dimensional local $K$-algebra with maximal ideal $\m.$



Now,
let $\bar{f} \colon B \to B/ \m^2$ be the composition of $f$ with the canonical map $B \to B / \m^2.$ Then,
after choosing a $K$-basis $\{v_1, \dots, v_s \}$ of $\m / \m^2,$ we can write
$$
\bar{f} (r) = g (r) + \delta_1 (r) v_1 + \cdots + \delta_s (r) v_s,
$$
where $g \colon R \to K$ is a ring homomorphism and $\delta_1, \dots, \delta_s \in \mathrm{Der}^{g}(R,K)$; in fact, $\delta_1, \dots, \delta_s \in \mathrm{Der}^{g}_k(R,K)$ because $f(k) \subset K$.
But by our assumption, $\mathrm{Der}^{g}_k (R,K) \leq 1$,
so since $f(R)$ spans $B/ \m^2$,
it follows that $s \leq 1.$ Consequently, by Nakayama's Lemma and \cite[Proposition 8.4]{At}, we see that $\m$ is generated by a single nilpotent element, which clearly generates $B$ over $K$, as needed.
\end{proof}

Taking $k = \Z$ in the above statement, we obtain the following corollary, which will be used in the proof of Theorem \ref{T-MainThm}.

\begin{cor}\label{C-1genAlgRing}
Suppose $R$ is a commutative ring, $B$ a commutative finite-dimensional algebra over an algebraically closed field $K$, and $f \colon R \to B$ a ring homomorphism such that $f(R)$ spans $B$ over $K$.
Assume that $\dim_K \mathrm{Der}^g (R,K) \leq 1$ for all ring homomorphisms $g \colon R \to K.$ Then
$$
B = B_1 \oplus \cdots \oplus B_r,
$$
where $B_i = K[\varepsilon_i]$, with $\varepsilon_i^{d_i} = 0$ for some $d_i \geq 1$.
\end{cor}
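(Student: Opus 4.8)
The plan is to deduce the corollary directly from Lemma~\ref{L-AlgRing1} by specializing $k = \Z$. Since the corollary already assumes that $f(R)$ spans $B$ over $K$, the only thing left to check is that the two remaining hypotheses of Lemma~\ref{L-AlgRing1} hold automatically when $k = \Z$: namely, that $f(\Z) \subset \overline{B}$ and that $\mathrm{Der}^g_{\Z}(R,K) = \mathrm{Der}^g(R,K)$ for every ring homomorphism $g \colon R \to K$.

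For the first point, recall that $\Z$ is the initial object in the category of commutative unital rings, so $f\vert_{\Z}$ is the unique ring homomorphism $\Z \to B$, i.e. $f(n) = n \cdot 1_B$. As $\overline{B}$ is a $K$-subalgebra of $B$, it contains $1_B$ and is stable under multiplication by scalars, hence $\Z \cdot 1_B \subset K \cdot 1_B \subset \overline{B}$; thus $f(\Z) \subset \overline{B}$. For the second point, any $\delta \in \mathrm{Der}^g(R,K)$ vanishes on the prime ring: the Leibniz rule gives $\delta(1_R) = \delta(1_R \cdot 1_R) = \delta(1_R) g(1_R) + g(1_R) \delta(1_R) = 2\,\delta(1_R)$, so $\delta(1_R) = 0$, and then additivity yields $\delta(n \cdot 1_R) = 0$ for all $n \in \Z$. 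Consequently the subspace $\mathrm{Der}^g_{\Z}(R,K)$ of derivations vanishing on $\Z$ coincides with all of $\mathrm{Der}^g(R,K)$, so the corollary's hypothesis $\dim_K \mathrm{Der}^g(R,K) \leq 1$ is precisely the bound $\dim_K \mathrm{Der}^g_{\Z}(R,K) \leq 1$ required to apply Lemma~\ref{L-AlgRing1}.

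With these observations in hand, Lemma~\ref{L-AlgRing1} applied with $k = \Z$ immediately gives the decomposition $B = B_1 \oplus \cdots \oplus B_r$ with $B_i = K[\varepsilon_i]$, $\varepsilon_i^{d_i} = 0$ for some $d_i \geq 1$, which is exactly the claim. I do not anticipate any obstacle here: the entire content has already been established in Lemma~\ref{L-AlgRing1}, and the corollary is a purely formal consequence, the only input being the trivial fact that $\Z$ is the prime ring, which forces both auxiliary hypotheses for free.
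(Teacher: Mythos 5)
Your proposal is correct and coincides with the paper's own derivation, which simply says "Taking $k = \Z$ in the above statement, we obtain the following corollary." You have merely spelled out the two routine checks (that $f(\Z) \subset \overline{B}$ and that $\mathrm{Der}^g_{\Z}(R,K) = \mathrm{Der}^g(R,K)$) that the paper leaves implicit.
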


Let us return to the notations introduced at the beginning of the section and assume that $H^{\circ}$ satisfies condition (Z). In view of our applications, we take $B$ to be a finite-dimensional commutative $K$-algebra of the form appearing in Lemma \ref{L-AlgRing1} and Corollary \ref{C-1genAlgRing}.
We have the following.



\begin{lemma}\label{L-Isogeny}
The morphism $\sigma \colon G(B) \to H^{\circ}$ is an isogeny.
\end{lemma}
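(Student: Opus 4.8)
The plan is to verify that $\sigma$ is a surjective morphism of connected algebraic groups with finite central kernel. Surjectivity onto $H^{\circ}$ is essentially built into the construction from \cite[Theorem 6.7]{IR}: the standard description $\rho|_{\Delta} = (\sigma \circ F)|_{\Delta}$ on a finite-index subgroup $\Delta$ forces the image $\sigma(G(B))$ to contain a finite-index subgroup of $\rho(E(\Phi,R))$, hence to be Zariski-dense in $H$; since $G(B)$ is connected (as $G$ is connected and $B$ is a local $K$-algebra, so $G(B) = (R_{B/K}G)(K)$ is connected) its image is a connected subgroup, and a Zariski-dense connected subgroup of $H$ must equal $H^{\circ}$. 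Thus $\sigma \colon G(B) \twoheadrightarrow H^{\circ}$. It then remains to show $\ker \sigma$ is finite.

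For the kernel, the first step is to identify $\ker\sigma$ with a central, normal subgroup scheme of $G(B)$. Normality is automatic; centrality follows because $G(B) = E(\Phi, B)$ (by \cite[Corollary 2]{M}) is generated by root subgroups and one argues, exactly as in the structure theory of Chevalley groups, that a normal subgroup of $E(\Phi, B)$ either contains a full root subgroup (impossible here, since $\dim G(B) = \dim H^{\circ} + \dim\ker\sigma$ and $\sigma$ is surjective with the root subgroups mapping nontrivially once one uses condition (Z) and Lemma \ref{L-Commutator}) or is central. Alternatively, and more cleanly: $G(B)$ is simply connected as an algebraic group over $K$ in the relevant sense — $R_{B/K}G$ is semisimple simply connected since $B$ is a finite local $K$-algebra with residue field $K$ and $G$ is simply connected — so any connected normal subgroup is a product of some of its (trivial, since it is almost simple up to the unipotent part) factors; the point is that $R_{B/K}G$ has $R_{\bar B/K}G = G$ as its maximal reductive quotient, with unipotent kernel $R_{B/K}G \to G$, and one checks $\sigma$ does not kill this unipotent part thanks to condition (Z). Hence $\ker\sigma$ lies in the center of $G(B)$.

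The decisive step is then to show that the center $Z(G(B))$ is finite, whence $\ker\sigma \subseteq Z(G(B))$ is finite and $\sigma$ is an isogeny. Here I would use that $G$ is simply connected: the scheme-theoretic center $Z(G)$ is a finite $\Z$-group scheme (a subgroup of the diagonal torus, cut out by the coroot characters), so $Z(R_{B/K}G) = R_{B/K}(Z(G))$, and since $B$ is a finite-dimensional $K$-algebra and $Z(G)$ is finite over $\Z$, the Weil restriction $R_{B/K}(Z(G))$ is a finite $K$-group scheme. Passing to $K$-points (which suffices as we work with $K$ algebraically closed of characteristic $0$, so everything is smooth and reduced), $Z(G(B)) = Z(G)(B)$ is finite. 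Therefore $\ker\sigma$ is finite.

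The main obstacle I anticipate is the centrality of $\ker\sigma$: one must rule out that $\sigma$ collapses part of the unipotent radical of $G(B)$ (the "$R_{B/K}G \to G$" kernel) onto something positive-dimensional in $H^{\circ}$ in a non-central way — and conversely that it does not kill that unipotent part entirely, which would make $\ker\sigma$ infinite. This is precisely where condition (Z), together with Lemma \ref{L-Commutator} ($H^\circ$ is perfect) and the fact that $f(R)$ spans $B$, must be invoked: condition (Z) guarantees that the unipotent radical $U$ of $H^\circ$ receives the unipotent part of $G(B)$ faithfully up to a central piece, matching dimensions on the unipotent and reductive parts separately, so that $\ker\sigma$ meets neither $U$ nor the reductive part in positive dimension. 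Once dimensions match on both the reductive quotient and the unipotent radical, $\ker\sigma$ is forced to be finite and central, completing the proof that $\sigma$ is an isogeny.
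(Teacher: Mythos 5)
Your surjectivity step is fine and matches the paper's in substance (the paper simply cites \cite[Proposition 5.3]{IR} for $\sigma(G(B))=H^{\circ}$). The problem is the finiteness of the kernel. You try to show $\ker\sigma$ is \emph{central} in $G(B)$ and then that the center is finite, but the centrality claim is unsupported and, as stated, your justifications for it are incorrect. The dichotomy ``a normal subgroup of $E(\Phi,B)$ either contains a full root subgroup or is central'' fails badly when $B$ has nilpotents: the congruence subgroups $G(B,\m^s)$ are normal, noncentral, and contain no full root subgroup $e_{\alpha}(B)$. Your alternative remark that $R_{B/K}G$ is ``semisimple simply connected'' is also false when $B\neq K$ — it has a nontrivial unipotent radical $R_{B/K}(G,\m)$ — and saying that condition (Z) prevents $\sigma$ from killing \emph{all} of this unipotent part does not give centrality of the kernel; the kernel could in principle be a positive-dimensional noncentral normal subgroup sitting inside the unipotent radical. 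So you have not ruled out a positive-dimensional kernel, which is the whole content of the lemma.

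The ingredient you are missing is the injectivity of $\sigma$ on each root subgroup. By \cite[Theorem 3.1 and Proposition 4.2]{IR}, for every root $\alpha$ there is an \emph{injective} regular map $\psi_{\alpha}\colon B\to H$ with $\sigma(e_{\alpha}(b))=\psi_{\alpha}(b)$. Using $G(B)=\prod_i G(B_i)$ with $B_i=K[\varepsilon_i]$, each factor has a Levi filtration whose layers are the simple group $G(K)$ and copies of the adjoint module $\g$ (cf. \cite[Proposition 6.5]{IR}). If $\ker\sigma$ were positive-dimensional, being normal it would contain one of these layers; but every such layer meets every root subgroup $e_{\alpha}(B)$ nontrivially, contradicting the injectivity of $\psi_{\alpha}$. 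That dimension-counting via the root subgroups, not centrality of the kernel, is what forces $\ker\sigma$ to be finite. You should replace your centrality argument with this one (or with some equivalent way of seeing that a nontrivial closed normal subgroup of $G(B)$ must already show up inside some $e_{\alpha}(B)$).
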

\begin{proof}
We sketch the argument for completeness, which follows closely the proof of \cite[Lemma 4.2]{IR1}. First, \cite[Proposition 5.3]{IR} implies that $\sigma (G(B)) = H^{\circ}.$ Next, by [{\it loc. cit.}, Theorem 3.1 and Proposition 4.2], for each root $\alpha \in \Phi$, there is an injective regular map $\psi_{\alpha} \colon B \to H$ such that
$$
\sigma (e_{\alpha} (b)) = \psi_{\alpha}(b) \ \ \ \text{for all} \ \  b \in B,
$$
where $e_{\alpha}(B)$ is the 1-parameter root subgroup of $G(B)$ corresponding to $\alpha.$ By our assumption,
$$
G(B) = G(B_1) \times \cdots \times G(B_r),
$$
where $B_i = K[\varepsilon_i]$, with $\varepsilon_i^{d_i} = 0$ for some $d_i \geq 1$. Now, for each $i$, we have a Levi decomposition
$$
G(B_i) = G(K) \ltimes G(B_i, \m_i),
$$
where $G(B_i, \m_i)$ is the congruence subgroup modulo the maximal ideal $\m_i \subset B_i$, such that for every $s = 1, \dots d_i -1$, the quotient $G(B_i, \m_i^s)/G(B_i, \m_i^{s+1})$ is isomorphic as an algebraic group over $K$ to the Lie algebra $\mathfrak{g}$ of $G$ (see [{\it loc. cit}, Proposition 6.5]). It follows that if $\sigma$ is not an isogeny, then it would kill one of the simple groups $G(K)$ or $\mathfrak{g}$, and since any such group intersects each root subgroup $e_{\alpha}(B),$ this would contradict the injectivity of $\psi_{\alpha}.$
\end{proof}



\section{Proofs of Theorems \ref{T-MainThm} and \ref{T-MainThm1}}\label{S-5}

In this section, we complete the proofs of our main results. We will begin with the argument for Theorem \ref{T-MainThm}, and will then comment on the modifications needed to obtain Theorem \ref{T-MainThm1}.

Let $K$ be an algebraically closed field of characteristic 0 and $R$ a commutative ring such that $\dim_K \mathrm{Der}^g(R,K) \leq 1$ for any ring homomorphism $g \colon R \to K$. Furthermore, let $\Phi$ be a reduced irreducible root system of rank $\geq 2$ such that $(\Phi , R)$ is a nice pair, and let
$$
\rho \colon E(\Phi , R) \to GL_m(K)
$$
be a representation.
Recall that our goal is to verify that $H^{\circ}$ satisfies condition (Z).

Assume now that $H^{\circ}$ does {\it not} satisfy condition (Z), and consider the set of all closed connected subgroups $V$ of the unipotent radical $U$ of $H^{\circ}$ that are normal in $H$ and have the property that $Z(H^{\circ}/V)
\cap (U/V) \neq \{ e \}$. Pick such a subgroup $V$ of maximal dimension. Then, after replacing $H$ by $H/V$ (which is again the Zariski closure of the image of a representation of $E(\Phi , R)$), we may assume that
\begin{equation}\label{E:EEE1}
Z(H^{\circ}) \cap U \neq \{ e \},
\end{equation}
but
\begin{equation}\label{E:EEE2}
Z(H^{\circ}/W) \cap (U/W) = \{ e \}
\end{equation}
for any nontrivial subgroup $W$ of $U$ that is normal in $H$. Set $W = Z(H^{\circ}) \cap U$; this is a commutative unipotent group over a field of characteristic 0, hence a vector group (see, e.g., \cite[Proposition 15.31]{MilneAGps}).

Consider now the representation
$$
\tilde{\rho} := \tau \circ \rho \colon E(\Phi, R) \to H/W,
$$
where $\tau \colon H \to H/W$ is the canonical map. By our assumption, $Z (H^{\circ}/ W) \cap (U/W) = \{ e \}$, i.e. $H^{\circ}/ W$ satisfies condition (Z). Therefore, our rigidity result \cite[Theorem 6.7]{IR} (cf. \S\ref{S-4}) yields the existence of
a finite-dimensional commutative $K$-algebra $\tilde{B}$, a ring homomorphism $\tilde{f} \colon R \to \tilde{B}$ with Zariski-dense image, and a morphism of algebraic groups $\tilde{\sigma} \colon G(\tilde{B}) \to H/W$
such that for a suitable finite-index subgroup $\tilde{\Delta} \subset E(\Phi, R),$ we have
$$
\tilde{\rho} \vert_{\tilde{\Delta}} = (\tilde{\sigma} \circ \tilde{F}) \vert_{\tilde{\Delta}},
$$
where $\tilde{F} \colon E(\Phi, R) \to G(\tilde{B})$ is the group homomorphism induced by $\tilde{f}.$


By construction, we have a central extension
\begin{equation}\label{E-CentralExtension10}
1 \to W \to H^{\circ} \to H^{\circ}/W \to 1,
\end{equation}
and we let
\begin{equation}\label{E-PullbackCE}
1 \to W \to E \to G(\tilde{B}) \to 1
\end{equation}
be the central extension obtained by taking the pullback of (\ref{E-CentralExtension10}) along the morphism $\tilde{\sigma}.$ This gives rise to the commutative diagram
$$
\xymatrix{1 \ar[r] & W \ar[r] \ar[d]_{{\rm id}} & E \ar[r] \ar[d]^{\theta} & G(\tilde{B}) \ar[r] \ar[d]^{\tilde{\sigma}} & 1 \\ 1 \ar[r] & W \ar[r] & H^{\circ} \ar[r] & H^{\circ}/W  \ar[r] & 1}
$$
where, according to Lemma \ref{L-Isogeny}, $\tilde{\sigma},$ and hence also $\theta$, are isogenies.

Now, according to Corollary \ref{C-1genAlgRing}, we have
$$
\tilde{B} = \tilde{B}_1 \oplus \cdots \oplus \tilde{B}_r,
$$
where $\tilde{B}_i = K[\varepsilon_i]$, with $\varepsilon_i^{d_i} = 0$ for some $d_i \geq 1.$
Since $E(\Phi, \tilde{B}_i) = G(\tilde{B}_i)$ by \cite[Corollary 2]{M}, it follows from Proposition \ref{P-Stein1} that $G(\tilde{B}_i)$ is perfect. Consequently,  we conclude from Lemma \ref{L-CE} and Proposition \ref{P-SplitCE} that (\ref{E-PullbackCE}) splits as a sequence of abstract groups. Then the abstract commutator $[E,E]$ has trivial intersection with $W$. But $[E , E]$ is also Zariski-closed (see \cite[Corolary 2.3]{Bo}), so since $W \neq \{ e \}$, we see that $\dim  [E , E] < \dim E$. Then, since $\theta$ is an isogeny, it follows that the dimension of
$[H^{\circ} , H^{\circ}] = \theta([E , E])$ is strictly smaller than $\dim H^{\circ} = \dim E$. This contradicts Lemma \ref{L-Commutator}, proving that $H^{\circ}$ satisfies condition (Z) and completing the proof of Theorem \ref{T-MainThm}.
$\Box$

\vskip3mm

We now turn to the proof of Theorem \ref{T-MainThm1}, which proceeds along similar lines. As above, $K$ will be an algebraically closed field of characteristic 0 and $\Phi$ a reduced irreducible root system of rank $\geq 2.$ Suppose $k$ is a commutative ring such that $(\Phi, k)$ is a nice pair and $R$ is a commutative $k$-algebra. Assume that $\dim_K {\rm Der}^g_k(R,K) \leq 1$ for all ring homomorphisms $g \colon R \to K,$ and let
$$
\rho \colon E(\Phi, R) \to GL_m(K)
$$
be a representation such that the restriction $\nu = \rho \vert_{E(\Phi, k)}$ is completely reducible (so that $\overline{\nu (E(\Phi, k))}^{\circ}$ is reductive). The goal is again to verify that $H^{\circ}$ satisfies condition (Z). Let us suppose that is not the case. Then, arguing as in the proof of Theorem \ref{T-MainThm}, we may assume that conditions (\ref{E:EEE1}) and (\ref{E:EEE2}) are satisfied.
We let $W = Z(H^{\circ}) \cap U$ and consider the representation
$$
\tilde{\rho} = \tau \circ \rho \colon E(\Phi, R) \to H/W,
$$
where $\tau \colon H \to H/W$ is the canonical map. If $\tilde{\nu}$ denotes the restriction $\tilde{\rho} \vert_{E(\Phi, k)}$, then again $\overline{\tilde{\nu}(E(\Phi, k))}^{\circ}$ is reductive. Let $\tilde{B}_0 \subset \tilde{B}$ be the $K$-algebras associated to $\tilde{\nu}$ and $\tilde{\rho}$, respectively. Then
$$
\tilde{B}_0 \simeq K \times \cdots \times K
$$
(see \cite[Lemma 5.7]{IR}), so, using Lemma \ref{L-AlgRing1}, we conclude
that 
$$
\tilde{B} = \tilde{B}_1 \oplus \cdots \oplus \tilde{B}_r,
$$
where $\tilde{B}_i = K[\varepsilon_i]$, with $\varepsilon_i^{d_i} = 0$ for some $d_i \geq 1.$ The same argument as above then leads to a contradiction, proving that $H^{\circ}$ satisfies condition (Z), as claimed.


\section{Examples and applications}\label{S-6}

In this section, we discuss examples of rings satisfying the hypotheses of Theorems \ref{T-MainThm} and \ref{T-MainThm1}.
We continue to denote by $K$ an algebraically closed field of characteristic 0.

As we already mentioned in \S\ref{S-Intro}, some initial examples of rings satisfying the assumptions of Theorem \ref{T-MainThm} are 1-generated rings and their localizations. More generally, we have the following.

\begin{lemma}\label{L-Examples1}
Let $\mathcal{O}$ be an integral ring extension of $\Z$,
$A$ be a quotient of the polynomial ring $\mathcal{O}[X]$ in one variable, and $R$ be any localization of $A$. Then $\dim_K {\rm Der}^g(R,K) \leq 1$ for any ring homomorphism $g \colon R \to K.$
\end{lemma}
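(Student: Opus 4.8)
The plan is to reduce to the well-understood case of derivations of a polynomial ring over a field and then track how derivations behave under the three operations involved: integral extension of $\Z$, taking a quotient, and localization. First I would fix a ring homomorphism $g \colon R \to K$. Since $R$ is a localization of $A$, and $K$ is a field, $g$ factors uniquely through $A$ (the elements inverted in $R$ are sent to units of $K$), and moreover any derivation $\delta \in {\rm Der}^g(R,K)$ is determined by its restriction to $A$ via the quotient rule $\delta(a/s) = (\delta(a) g(s) - g(a)\delta(s)) / g(s)^2$; conversely every $g$-derivation of $A$ extends uniquely. Hence ${\rm Der}^g(R,K) \simeq {\rm Der}^{g'}(A,K)$, where $g' \colon A \to K$ is the induced homomorphism, and it suffices to bound $\dim_K {\rm Der}^{g'}(A,K) \leq 1$.

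Next, write $A = \mathcal{O}[X]/I$ for an ideal $I$, so a $g'$-derivation of $A$ is the same as a $g''$-derivation $\delta \colon \mathcal{O}[X] \to K$ (where $g''$ is the composite $\mathcal{O}[X] \to A \to K$) that kills $I$. Such a $\delta$ is determined by the pair $(\delta|_{\mathcal{O}}, \delta(X)) \in {\rm Der}^{g''|_\mathcal{O}}(\mathcal{O},K) \times K$ by the Leibniz rule applied to monomials $\sum c_n X^n$ with $c_n \in \mathcal{O}$. So the key point is that $\mathcal{O}$, being an integral extension of $\Z$, admits no nonzero $K$-valued derivations with respect to any $g_0 \colon \mathcal{O} \to K$: indeed, if $b \in \mathcal{O}$ satisfies a monic relation $b^m + a_{m-1} b^{m-1} + \cdots + a_0 = 0$ with $a_j \in \Z$, then applying $\delta$ and using $\delta(a_j) = 0$ (as $\delta$ vanishes on $\Z$, being $\Z$-linear on the prime ring and $\Z \cdot 1$ killed by Leibniz) gives $(m g_0(b)^{m-1} + \cdots)\,\delta(b) = 0$; in characteristic $0$ one shows the scalar coefficient can be taken nonzero by choosing the minimal such relation, forcing $\delta(b) = 0$. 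Therefore ${\rm Der}^{g''|_{\mathcal{O}}}(\mathcal{O},K) = 0$, and $\delta$ is determined entirely by the single scalar $\delta(X) \in K$, giving $\dim_K {\rm Der}^{g''}(\mathcal{O}[X],K) \leq 1$; passing to the quotient killing $I$ can only cut this down, so $\dim_K {\rm Der}^{g'}(A,K) \leq 1$ as well, completing the argument.

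The main obstacle I anticipate is the derivation-vanishing step for the integral extension $\mathcal{O}/\Z$: one has to argue carefully that for each $b \in \mathcal{O}$ there is an integral relation whose formal derivative, evaluated under $g_0$, has nonzero leading behavior in characteristic zero. The clean way is to take the minimal polynomial of $g_0(b)$ over $\Q$ (or rather, to work with the monic integral dependence relation of least degree): if $P(T) \in \Z[T]$ is monic of minimal degree with $P(b) = 0$, then its image $\bar P \in \Q[T]$ is a power of the minimal polynomial of the algebraic number $g_0(b)$, and since $\mathrm{char}\,K = 0$ and $\Q \hookrightarrow K$, the derivative $\bar P'(g_0(b))$ is nonzero unless $g_0(b)$ is a multiple root — which, after replacing $P$ by the radical of its image and clearing denominators, we may exclude. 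Once this is settled the rest is bookkeeping with the Leibniz rule, which I would not spell out in detail.
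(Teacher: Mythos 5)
Your proof is correct and takes essentially the same route as the paper: reduce via the injective restriction $\mathrm{Der}^g(R,K)\hookrightarrow\mathrm{Der}^{g|_A}(A,K)$, unwind the quotient of $\mathcal{O}[X]$ so that a derivation is determined by $\delta(X)$ once $\delta|_{\mathcal{O}}=0$, and supply the key fact $\mathrm{Der}^{f}(\mathcal{O},K)=0$ from integrality over $\Z$. One small inaccuracy in your last paragraph: the image $\bar P$ of the minimal monic relation over $\Z$ need not be a power of the minimal polynomial of $g_0(b)$ (e.g.\ $\mathcal{O}=\Z\times\Z$, $b=(1,2)$), but when $\mathcal{O}$ is a domain --- the case the paper actually uses, where every $f\colon\mathcal{O}\to K$ is injective --- the minimal monic $P$ is the minimal polynomial itself and is separable in characteristic $0$, so $P'(g_0(b))\neq 0$ as you need.
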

\begin{proof}
This is elementary, so we only give a sketch. First, any ring homomorphism $f \colon \mathcal{O} \to K$ is injective and ${\rm Der}^f (\mathcal{O}, K) = 0$. It follows that $\dim_K {\rm Der}^g (A, K) \leq 1$ for any ring homomorphism $g \colon A \to K.$ On the other hand, for any ring homomorphism $g \colon R \to K$, the restriction ${\rm Der}^g (R,K) \to {\rm Der}^{g \vert_A} (A, K)$ is injective, yielding our claim.
\end{proof}

\vskip3mm

\noindent {\it Proof of Corollary \ref{C-1}.} Let $\mathcal{O}$ be a ring of $S$-integers in a number field, and let $R$ be a localization of $\mathcal{O}[X]$ such that $(\Phi, R)$ is a nice pair. Then Theorem \ref{T-MainThm}, in conjunction with Lemma \ref{L-Examples1}, gives the first assertion. Furthermore,
since $SK_1 (\mathcal{O}) = 0$ and $\mathcal{O}$ has Krull dimension 1, a result of Suslin (see \cite[Corollary 6.6]{Suslin}) guarantees that $SL_n (\mathcal{O}[X])$ is generated by elementary matrices for all $n \geq 3$, yielding the second statement. $\Box$

Next, we mention a relative version of Lemma \ref{L-Examples1}, whose proof is similar.

\begin{lemma}\label{L-Examples2}
Let $k$ be a commutative ring and $R$ be any localization of a quotient of the polynomial ring $k[X]$. Then $\dim_K {\rm Der}^g_k (R,K) \leq 1$ for any ring homomorphism $g \colon R \to K.$
\end{lemma}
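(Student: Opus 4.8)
The plan is to imitate the proof of Lemma~\ref{L-Examples1}, the only difference being that the hypothesis that a derivation vanishes on $k$ now plays, essentially for free, the role of the input ${\rm Der}^f(\mathcal{O},K)=0$ used there. Write $R=\Sigma^{-1}A$, where $A=k[X]/I$ is a quotient of the polynomial ring and $\Sigma\subset A$ is a multiplicative subset, fix a ring homomorphism $g\colon R\to K$, and let $\bar{g}\colon A\to K$ be its composition with the canonical map $A\to R$.

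First I would note that for any ring homomorphism $h\colon k[X]\to K$, a derivation $\delta\in{\rm Der}^h_k(k[X],K)$ is completely determined by the single scalar $\delta(X)\in K$: since $\delta$ is additive, kills $k$, and obeys the Leibniz rule, its value on every polynomial is forced. Hence $\dim_K{\rm Der}^h_k(k[X],K)\le 1$. Next, pulling back along the canonical surjection $q\colon k[X]\to A$ gives an injection ${\rm Der}^{\bar{g}}_k(A,K)\hookrightarrow{\rm Der}^{\bar{g}\circ q}_k(k[X],K)$, so $\dim_K{\rm Der}^{\bar{g}}_k(A,K)\le 1$ as well. Finally, restriction along $A\to R$ defines a map ${\rm Der}^g_k(R,K)\to{\rm Der}^{\bar{g}}_k(A,K)$, and it is injective: every element of $R$ has the form $a/s$ with $a\in A$, $s\in\Sigma$, and $g(s)\in K^{\times}$, so the quotient rule
\[
\delta(a/s)=g(s)^{-1}\delta(a)-g(a)\,g(s)^{-2}\delta(s)
\]
shows that $\delta$ is determined by its values on (the image of) $A$. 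Combining the three steps yields $\dim_K{\rm Der}^g_k(R,K)\le 1$.

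I do not expect any genuine obstacle here: each step is a direct unwinding of the definition of a derivation, and the one point deserving a word of justification is the injectivity of the restriction map on the localization, which amounts to the standard fact that $K$-valued derivations extend uniquely along localizations.
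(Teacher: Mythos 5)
Your proof is correct and is essentially what the paper has in mind: the paper states that the proof of this lemma is "similar" to that of Lemma~\ref{L-Examples1}, and your argument is exactly that proof with the role of $\mathrm{Der}^f(\mathcal{O},K)=0$ replaced by the built-in requirement that derivations vanish on $k$. The three steps (derivations on $k[X]$ killing $k$ are determined by $\delta(X)$; pull back injectively along the quotient $k[X]\to A$; restrict injectively along the localization $A\to R$) match the paper's sketch.
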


Combining this statement with Theorem \ref{T-MainThm1}, we obtain the following.

\begin{cor}\label{C-Examples1}
Let $K$ be an algebraically closed field of characteristic 0, $\Phi$ a reduced irreducible root system of rank $\geq 2$, and $k$ a commutative ring. Suppose $R$ is a localization of a quotient of $k[X]$. Denote by $k'$ the image of $k$ in $R$, and assume that $(\Phi, k')$ is a nice pair. Then any representation $\rho \colon E(\Phi, R) \to GL_m (K)$ such that $\overline{\rho(E(\Phi, k'))}^{\circ}$ is reductive has a standard description.
\end{cor}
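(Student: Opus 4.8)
\textit{Proof proposal for Corollary \ref{C-Examples1}.} The plan is to reduce the statement directly to Theorem \ref{T-MainThm1}, taking as the base ring not $k$ itself but its image $k'$ in $R$. The first step is the observation that, since $k'$ is the image of $k$ in $R$, the structure morphism $k[X] \to R$ factors through $k'[X]$; consequently $R$ is in fact a localization of a quotient of $k'[X]$, and it carries a natural structure of commutative $k'$-algebra. This reduction is what makes the hypotheses line up: the nice-pair assumption is imposed in the corollary only on $k'$ (not on $k$, which could even contain zero divisors or torsion that spoil niceness), and Lemma \ref{L-Examples2} is exactly tailored to the relative setting.

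The second step is to verify the derivation bound. Applying Lemma \ref{L-Examples2} with $k'$ in place of $k$ — which is legitimate by the first step — we get $\dim_K \mathrm{Der}^g_{k'}(R,K) \leq 1$ for every ring homomorphism $g \colon R \to K$. Together with the hypothesis that $(\Phi, k')$ is a nice pair, this puts us in the situation of Theorem \ref{T-MainThm1} with base ring $k'$, provided we know that the restriction $\rho\vert_{E(\Phi, k')}$ is completely reducible.

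The third step is therefore to translate the reductivity hypothesis of the corollary into the complete reducibility hypothesis of Theorem \ref{T-MainThm1}. In characteristic $0$, a subgroup $\Gamma \subseteq GL_m(K)$ acts completely reducibly on $K^m$ if and only if the unipotent radical of $\overline{\Gamma}$ acts trivially on $K^m$; since $\overline{\Gamma}$ is itself a subgroup of $GL_m(K)$, this forces that unipotent radical to be trivial, i.e. $\overline{\Gamma}$ — equivalently $\overline{\Gamma}^{\circ}$ — is reductive, and conversely. Applying this with $\Gamma = \rho(E(\Phi, k'))$ shows that the assumption ``$\overline{\rho(E(\Phi,k'))}^{\circ}$ is reductive'' is precisely the complete reducibility of $\rho\vert_{E(\Phi,k')}$. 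Theorem \ref{T-MainThm1} then yields a standard description of $\rho$, completing the proof.

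I do not expect any genuine obstacle here: all the substantive work is already contained in Theorem \ref{T-MainThm1} and Lemma \ref{L-Examples2}. The only points that require a moment's care — and that merit being spelled out — are the passage from $k$ to its image $k'$ (needed both so that Lemma \ref{L-Examples2} supplies the relative derivation bound and so that the nice-pair hypothesis on $k'$ alone is enough to invoke Theorem \ref{T-MainThm1}) and the standard equivalence, in characteristic $0$, between reductivity of the Zariski closure of the image and complete reducibility of the representation.
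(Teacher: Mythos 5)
Your proof is correct and follows the same route the paper intends: apply Lemma \ref{L-Examples2} with $k'$ in place of $k$ (after observing that the structure map $k[X]\to R$ factors through $k'[X]$, so $R$ is a localization of a quotient of $k'[X]$ and a $k'$-algebra), and then invoke Theorem \ref{T-MainThm1} with base ring $k'$, translating reductivity of $\overline{\rho(E(\Phi,k'))}^{\circ}$ into complete reducibility of $\rho\vert_{E(\Phi,k')}$ via the standard characteristic-zero equivalence. The paper compresses this into the single sentence ``Combining this statement with Theorem \ref{T-MainThm1}, we obtain the following''; you have simply made the routine bookkeeping explicit.
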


\vskip3mm

\noindent {\bf Remark 6.4.} Let $\mathcal{O}$ be a ring of $S$-integers in a number field. Then, as we remarked above, ${\rm Der}^g (\mathcal{O},K) = 0$ for any ring homomorphism $g \colon \mathcal{O} \to K$, which in fact implies that if $\Phi$ is a reduced irreducible root system of rank $\geq 2$ and $(\Phi, \mathcal{O})$ is a nice pair, then any representation $\rho \colon E(\Phi, \mathcal{O}) \to GL_m(K)$ is automatically completely reducible (see \cite[Corollary 5.2]{IR1}). Thus, Corollary \ref{C-Examples1} subsumes the first assertion of Corollary \ref{C-1}.

\addtocounter{thm}{1}

\vskip3mm

We will now show that the class of rings in question is much larger than those obtained from the rings of polynomials in one variable by taking quotients and localizations and includes, for example, coordinate rings of smooth affine curves.
\begin{prop}\label{P-RingCurve}
Suppose $C$ is a smooth irreducible affine curve defined over a subfield $k \subset K$ with coordinate ring $R = k[C].$
Then $\dim_K {\rm Der}^g_{k} (R,K) \leq 1$ for any homomorphism $g \colon R \to K.$
\end{prop}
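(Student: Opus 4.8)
The plan is to rephrase ${\rm Der}^g_k(R,K)$ in terms of Kähler differentials and then invoke the smoothness of $C$. Write $\Omega_{R/k}$ for the module of relative differentials, and regard $K$ as an $R$-module via $g$ (so that $k$ acts through $g|_k$). By the universal property of $\Omega_{R/k}$ together with tensor--hom adjunction one has natural isomorphisms
$$
{\rm Der}^g_k(R,K) \;\cong\; \Hom_R(\Omega_{R/k}, K) \;\cong\; \Hom_K\big(\Omega_{R/k} \otimes_{R,g} K,\, K\big),
$$
so that $\dim_K {\rm Der}^g_k(R,K) = \dim_K\big(\Omega_{R/k}\otimes_{R,g}K\big)$, and it suffices to show the right-hand side has dimension at most $1$.

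Since ${\rm char}\,K = 0$, the subfield $k$ is perfect; hence the hypothesis that $C$ is a smooth irreducible affine curve over $k$ means exactly that $R = k[C]$ is a (geometrically) regular domain of Krull dimension $1$, i.e. that $\mathrm{Spec}\,R \to \mathrm{Spec}\,k$ is smooth of pure relative dimension $1$. By the standard differential criterion for smoothness, $\Omega_{R/k}$ is then a finitely generated projective $R$-module of constant rank $1$. Consequently, setting $\mathfrak{p} = \ker g$ and factoring $g$ as $R \to R_{\mathfrak{p}} \to \kappa(\mathfrak{p}) \hookrightarrow K$, we obtain
$$
\Omega_{R/k}\otimes_{R,g}K \;\cong\; \big((\Omega_{R/k})_{\mathfrak{p}}\otimes_{R_{\mathfrak{p}}}\kappa(\mathfrak{p})\big)\otimes_{\kappa(\mathfrak{p})}K,
$$
and since $(\Omega_{R/k})_{\mathfrak{p}}$ is free of rank $1$ over $R_{\mathfrak{p}}$, this $K$-vector space is $1$-dimensional. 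Therefore $\dim_K {\rm Der}^g_k(R,K) = 1$, which is in fact slightly stronger than the asserted bound.

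I do not expect a genuine obstacle: the statement amounts to the fact that the module of differentials of a smooth curve is invertible, read off at the prime $\ker g$. The only point worth a remark is that $g$ need not correspond to a closed point of $C$ — as $R$ is a domain, $\ker g$ may well be the generic point, with $\kappa(\mathfrak{p}) = k(C)$ of transcendence degree $1$ over $k$ — but the localization computation above is uniform in $\mathfrak{p}$, so no case analysis is required, the relevant input in every case being precisely the rank-one local freeness of $\Omega_{R/k}$ guaranteed by smoothness.
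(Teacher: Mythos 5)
Your proposal is correct and follows essentially the same route as the paper's proof: both pass from ${\rm Der}^g_k(R,K)$ to a Hom into the module of K\"ahler differentials, localize at $\mathfrak{p} = \ker g$, and invoke smoothness of the curve to conclude that $\Omega_{R/k}$ is locally free of rank~$1$ at $\mathfrak{p}$. The only cosmetic difference is that you dualize once more via tensor--hom adjunction and compute $\dim_K(\Omega_{R/k}\otimes_{R,g}K)$, whereas the paper stays with the Hom description over $\mathcal{O}_{C,x}\cong R_{\mathfrak{p}}$; your observations that $\mathfrak{p}$ need not be closed and that equality actually holds are both accurate (the latter is also noted in the paper's Remark~6.6(b)).
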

\begin{proof}
Set $\mathfrak{p} = \ker g$ and let $R_{\mathfrak{p}}$ be the corresponding localization of $R$. Then clearly
$$
{\rm Der}^g_k (R, K) = {\rm Der}^{\tilde{g}}_k (R_{\mathfrak{p}}, K),
$$
where $\tilde{g}$ denotes the localization of $g$. On the other hand, $R_{\mathfrak{p}}$ can be identified with the local ring $\mathcal{O}_{C,x}$ of $C$ at $x$, where $x$ is the point of $C$ corresponding to $\mathfrak{p}$, and
$$
{\rm Der}^{\tilde{g}}_k(\mathcal{O}_{C,x}, K) = \Hom_{\mathcal{O}_{C,x}-{\rm mod}}(\Omega_{\mathcal{O}_{C,x}/k}, K),
$$
where $\Omega_{\mathcal{O}_{C,x}/k}$ is the module of relative differentials of $\mathcal{O}_{C,x}$ over $k$, and $K$ is endowed with the structure of an $\mathcal{O}_{C,x}$-module via $\tilde{g}.$ Since $C$ is smooth of dimension 1, $\Omega_{\mathcal{O}_{C,x}/k}$ is a free $\mathcal{O}_{C,x}$-module of rank 1 (see, e.g., \cite[\S 8.5, Proposition 5]{Bosch}), which yields our claim.

\end{proof}

\vskip3mm

\noindent {\bf Remark 6.6.}

\noindent (a) While Proposition \ref{P-RingCurve} remains valid for {\it all} irreducible affine curves $C$ if $g$ is {\it injective}, the following example shows that it may fail for non-smooth curves if $g$ is no longer injective. Let $C/\Q$ be the affine plane curve given by the equation $X^3 - Y^2 = 0.$ Then $R = \Q[X,Y]/(X^3 - Y^2),$ and if $g \colon R \to \overline{\Q}$ is the ring homomorphism induced from $g_0 \colon \Q[X,Y] \to \overline{\Q}$ sending $X$ and $Y$ to 0, then
one easily checks that $\dim_{\overline{\Q}}{\rm Der}^g_{\Q} (R, \overline{\Q}) = 2$ (see \cite[Remark 4.8]{IR1}).

\vskip1mm

\noindent (b) If $k$ is a number field and $\mathcal{O} \subset k$ is a ring of $S$-integers, we can further generalize Proposition \ref{P-RingCurve} as follows. Suppose $C \subset \mathbb{A}_K^t$, and let $c_1, \dots, c_t$ denote the images in $k[C]$ of the coordinate functions on $\mathbb{A}_k^t.$ Set $R \subset k[C]$ to be the $\mathcal{O}$-algebra generated by $c_1, \dots, c_t.$ Then using the fact that any ring homomorphism $f \colon \mathcal{O} \to K$ is injective and ${\rm Der}^f (\mathcal{O}, K) = 0$, one shows that $\dim_K {\rm Der}^g (R, K) = \dim_K {\rm Der}^g_{\mathcal{O}} (R, K) = 1$ for any homomorphism $g \colon R \to K$. In particular, taking $C = \mathbb{A}_K^1$, we recover Lemma \ref{L-Examples1}.

\vskip1mm

\noindent (c) We should point out that one obtains further examples by considering unramified extensions of the rings in Proposition \ref{P-RingCurve}. We recall that a ring homomorphism $R \to T$ of finite type is said to be {\it unramified} if $\Omega_{T/R} = 0$ (see, e.g.,  \cite[Commutative algebra, \S 147]{Stacks} for a detailed discussion). It is a straightforward consequence of the definition that if $R$ is as in Proposition \ref{P-RingCurve} and $T/R$ is an unramified ring extension, then for any ring homomorphism $g \colon T \to K$, we have an embedding ${\rm Der}^{g}_k (T, K) \hookrightarrow {\rm Der}^{g \vert_R}_k (R,K),$ hence $\dim_K {\rm Der}^{g}_k (T, K) \leq 1$ (see, e.g., \cite[\S 8.1, Proposition 12]{Bosch}).

\vskip3mm

\addtocounter{thm}{1}

In conclusion, Theorem \ref{T-MainThm1}, in conjunction with Proposition \ref{P-RingCurve}, leads to the following rigidity result.

\begin{thm}\label{T-CurveRings}
Let $K$ be an algebraically closed field of characteristic 0 and $\Phi$ a reduced irreducible root system of rank $\geq 2.$ Suppose $C$ is a smooth irreducible affine curve defined over a field $k$ of characteristic 0 and set $R = k[C]$ to be the coordinate ring of $C$. If $\rho \colon E(\Phi, R) \to GL_m (K)$ is a representation such that $\overline{\rho (E(\Phi, k))}^{\circ}$ is reductive, then $\rho$ has a standard description.
\end{thm}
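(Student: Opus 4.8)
The plan is to derive Theorem \ref{T-CurveRings} directly from Theorem \ref{T-MainThm1}, applied to the base ring $k$ and the $k$-algebra $R = k[C]$. Concretely, I would check the three hypotheses of Theorem \ref{T-MainThm1} in turn. First, $(\Phi, k)$ is a nice pair: since $k$ has characteristic $0$ it is a $\Q$-algebra, so $2$ and $3$ are units in $k$, and hence $(\Phi,k)$ --- as well as $(\Phi, R)$ --- is a nice pair for every reduced irreducible root system $\Phi$. Second, the hypothesis that $\overline{\rho(E(\Phi,k))}^{\circ}$ be reductive is exactly the complete reducibility of $\rho\vert_{E(\Phi,k)}$ demanded by Theorem \ref{T-MainThm1}: in characteristic $0$ a finite-dimensional representation of an abstract group is semisimple if and only if the identity component of the Zariski closure of its image is reductive, a standard fact already used in the proof of Theorem \ref{T-MainThm1}.

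The one hypothesis requiring a small argument is the derivation bound $\dim_K \mathrm{Der}^g_k(R,K) \le 1$ for every ring homomorphism $g \colon R \to K$, and for this I would invoke Proposition \ref{P-RingCurve}, after reconciling a purely formal discrepancy: Proposition \ref{P-RingCurve} is stated for a curve over a \emph{subfield} $k \subset K$, whereas in Theorem \ref{T-CurveRings} the field $k$ is abstract. This is harmless. Given any ring homomorphism $g \colon R \to K$, its restriction to the image of $k$ in $R$ is a homomorphism $k \to K$ of fields, hence injective; identifying $k$ with the subfield $g(k) \subset K$, the curve $C$ is a smooth irreducible affine curve over this subfield with coordinate ring $R = k[C]$, so Proposition \ref{P-RingCurve} applies and gives $\dim_K \mathrm{Der}^g_k(R,K) \le 1$. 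Since $g$ was arbitrary, the derivation hypothesis of Theorem \ref{T-MainThm1} holds. (If $R$ admits no homomorphism to $K$ the hypothesis is vacuous; and this case cannot in fact occur for a representation with infinite image, since such a $\rho$ produces via the rigidity machinery a nonzero finite-dimensional $K$-algebra $B$ with a homomorphism $R \to B$, whence a homomorphism $R \to K$ by projecting to a residue field of $B$ --- while for finite image the assertion is trivial by Remark 1.2(a).)

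With the three hypotheses in hand, Theorem \ref{T-MainThm1} would immediately yield that $H^{\circ}$ satisfies condition (Z), and therefore that $\rho$ has a standard description. Consequently there is no real obstacle to surmount here: the genuinely substantial inputs --- the splitting of central extensions of $G(K[\varepsilon])$ from Proposition \ref{P-SplitCE} and the control of the associated finite-dimensional algebra, both built into Theorem \ref{T-MainThm1}, together with the computation in Proposition \ref{P-RingCurve} that $\Omega_{\mathcal{O}_{C,x}/k}$ is free of rank $1$ at every point of a smooth curve --- have already been established, and Theorem \ref{T-CurveRings} is precisely their combination, the only thing to monitor being the elementary observation that each $K$-valued point of $R$ pins down an embedding $k \hookrightarrow K$ along which $C$ becomes a curve over a subfield of $K$.
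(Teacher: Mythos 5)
Your proposal is correct and takes exactly the route the paper intends: the statement is announced right after the sentence ``Theorem \ref{T-MainThm1}, in conjunction with Proposition \ref{P-RingCurve}, leads to the following rigidity result,'' and you have verified the three hypotheses of Theorem \ref{T-MainThm1} --- niceness of $(\Phi,k)$ from characteristic $0$, the equivalence of complete reducibility with reductivity of the identity component of the Zariski closure, and the derivation bound from Proposition \ref{P-RingCurve} --- together with the minor bookkeeping point (absent from the paper, but genuinely needed if one reads Proposition \ref{P-RingCurve} literally) that each $g\colon R\to K$ pins down an embedding $k\hookrightarrow K$ under which $\mathrm{Der}^g_k(R,K)$ is computed exactly as in that proposition. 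Your side remark about the case with no homomorphism $R\to K$ is a harmless extra; the hypothesis is then vacuous anyway and Theorem \ref{T-MainThm1} applies regardless.
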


We note that if $k$ is a number field, then $\overline{\rho (E(\Phi, k))}^{\circ}$ is automatically reductive (see Remark 6.4), leading to an unconditional rigidity statement in this case (cf. Theorem \ref{P-MainProp1}). We also have a similar unconditional result if we take $R$ to be the $\mathcal{O}$-algebra discussed in Remark 6.6(b) and $(\Phi, R)$ is a nice pair.








%



\bibliographystyle{amsplain}

\begin{thebibliography}{100}

\bibitem{At} M.F.~Atiyah, I.G.~MacDonald, {\it Introduction to Commutative Algebra}, Westview Press, 1969


\bibitem{BMS} H.~Bass, J.~Milnor, J.P.~Serre, {\it Solution of the congruence subgroup problem for $SL_ n(n \geq 3)$ and $Sp_{2n} (n \geq 2)$}, Publ. Math., Inst. Hautes Etud. Sci. {\bf 33} (1967), 59-137.


\bibitem{Bo} A.~Borel, {\it Linear Algebraic Groups}, second ed., GTM 126, Springer 1991

\bibitem{Bosch} S.~Bosch, {\it Algebraic geometry and commutative algebra}, Universitext, Springer (2013)


\bibitem{CGP} B.~Conrad, O.~Gabber, G.~Prasad, {\it Pseuso-reductive groups, 2nd edition}, New Mathematical Monographs {\bf 26}, Cambridge University Press (2015)

\bibitem{EJK} M.~Ershov, A.~Jaikin-Zapirain, M.~Kassabov, {\it Property (T) for groups graded by root systems}, arXiv:1102.0031, to appear in Mem. Amer. Math. Soc.


\bibitem{KN} M.~Kassabov and N.~Nikolov, {\it Universal Lattices and Property Tau}, Invent. Math. {\bf 165} (2006), 209-224

\bibitem{Mar} G.A.~Margulis, {\it Discrete Subgroups of Semisimple Lie Groups}, Springer, New York/Berlin 1991

\bibitem{M} H.~Matsumoto, {\it Subgroups of finite index in certain arithmetic groups}, Proc. Sympos. Pure Math., AMS {\bf 9} (1966), 99-103

\bibitem{M1} H.~Matsumoto, {\it Sur les sous-groupes arithm\'etiques des groupes semi-simples d\'eploy\'es}, Ann. Sci. \'Ecole Norm. Sup.(4) {\bf 2} (1969), 1-62.

\bibitem{MilneAGps} J.~Milne, {\it Algebraic Groups: The theory of group schemes of finite type over a field}, available at http://www.jmilne.org/math/CourseNotes/iAG200.pdf

\bibitem{Mil} J.~Milnor, {\it Introduction to algebraic $K$-theory}, Annals of Mathematics Studies {\bf 72}, Princeton University Press, Princeton, N.J. (1971)

\bibitem{P} R.S.~Pierce, {\it Associative Algebras}, GTM 88, Springer (1982)

\bibitem{CongKer} A.S.~Rapinchuk, I.A.~Rapinchuk, {\it Centrality of the congruence kernel for elementary subgroups of Chevalley groups of rank $>1$ over noetherian rings}, Proc. Amer. Math. Soc. {\bf 139}, no. 9, (2011), 3099-3113.


\bibitem{IR} I.A.~Rapinchuk, {\it On linear representations of Chevalley groups over commutative rings},  Proc. Lond. Math. Soc. (3) {\bf 102}, no. 5, (2011), 951-983.

\bibitem{IR1} I.A.~Rapinchuk, {\it On abstract representations of the groups of rational points of algebraic groups and their deformations}, Algebra \& Number Theory {\bf 7} (7) (2013), 1685-1723.

\bibitem{Shalom} Y.~Shalom, {\it Bounded generation and Kazhdan's property $(T)$}, Publ. Math. IHES {\bf 90} (1999), 145-168

\bibitem{Sh} D.~Shenfeld, {\it On semisimple representations of universal lattices}, Groups Geom. Dyn.  {\bf 4}, no. 1 (2010), 179–193.

\bibitem{Stacks} Stacks Project, available at http://stacks.math.columbia.edu/

\bibitem{St1} M.R.~Stein {\it Generators, relations, and coverings of Chevalley groups over commutative rings}, Amer. J. Math. {\bf 93} (1971), no.~4, 965-1004

\bibitem{St2} M.R.~Stein {\it Surjective Stability in dimension 0 for $K_2$ and related functors}, Transactions of the American Mathematical Society, Vol. 178 (Apr., 1973), pp. 165-191

\bibitem{Stb1} R.~Steinberg, {\it Lectures on Chevalley groups}, mimeographed lectures notes, Yale University Math. Dept., New Haven, CT, 1968

\bibitem{Suslin} A.A.~Suslin, {\it On the structure of the special linear group over rings of polynomials}, Izv. Akad. Nauk SSSR Ser. Mat. {\bf 41} (1977), 235-252; English transl. Math.-USSR Izv. {\bf 11} (1977), 221-238.

\bibitem{vdK} W.~van der Kallen, {\it The $K_2$ of rings with many units}, Ann. Sci. \'Ecole Norm. Sup. (4) {\bf 10} (1977), no. 4, 473-515.
\end{thebibliography}

\end{document}